\documentclass[12pt,oneside]{amsart}
\usepackage{amssymb}
\usepackage{verbatim}
\usepackage{xcolor}
\usepackage{graphicx}
\usepackage{overpic}
\usepackage{tikz}
\usetikzlibrary{spy,  calc}
\usetikzlibrary{matrix,positioning,arrows.meta}
\usepackage{float}
\usepackage{adjustbox}
\newenvironment{fitdiagram}{%
  \begin{adjustbox}{max totalsize={\linewidth}{0.86\textheight},center}%
}{\end{adjustbox}}
\usepackage[margin=1in]{geometry}
\usepackage{graphicx}
\usepackage{tikz}
\usetikzlibrary{matrix,arrows.meta}

\newtheorem{theorem}{Theorem}[section]
\newtheorem{proposition}[theorem]{Proposition}
\newtheorem{lemma}[theorem]{Lemma}

\newtheorem*{main theorem}{Main Theorem}

\newtheorem{claim}[theorem]{Claim}

\theoremstyle{definition}
\newtheorem{definition}[theorem]{Definition}

\makeatletter
\newtheorem*{rep@theorem}{\rep@title}
\newcommand{\newreptheorem}[2]{%
\newenvironment{rep#1}[1]{%
 \def\rep@title{#2 \ref{##1}}%
 \begin{rep@theorem}}%
 {\end{rep@theorem}}}
\makeatother

\newreptheorem{theorem}{Theorem}
\newreptheorem{corollary}{Corollary}

\theoremstyle{remark}
\newtheorem{remark}[theorem]{Remark}

\usepackage{amsmath}

\numberwithin{equation}{section}

\newcommand{\R}{$R$}

\begin{document}

\title{On a family of stably handle-slide trivial \R-links}
\author[Wenjie Diao]{Wenjie Diao}
\address{\hskip-\parindent
Wenjie Diao\\
School of Mathematical Sciences, Peking University
}
\email{740997225@qq.com}
\keywords{Generalized Property R Conjecture,
$R$-links, handle-slides, stable handle-slide triviality, tunnel number}

\begin{abstract}
 An \R-link is an $n$-component link $L$ in $S^3$ such that Dehn surgery on $L$ yields $\#^n S^1\times S^2$. Adapting work of Gompf, Scharlemann and Thompson,  Meier and Zupan produced a family of \R-links $L(p,q; c/d)$, where the pairs $(p,q)$ and $(c,d)$ are relatively prime and $c$ is even.  For most of link in this family, whether it is handle-slide trivial remains open. In this work, we show one subfamily, $L(3,2; 4/d)$, are stably handle-slide trivial for all $d$.
\end{abstract}
\maketitle
\section{Introduction}
An \R-link is an $n$-component link $L\subset S^3$ such that $L$ has a 0-framed Dehn surgery yielding $\#^n (S^1\times S^2)$. 
For the case of \R-knot, Gabai resolved the celebrated Property R Conjecture \cite{Gabai-foliation3}: if $0$-framed Dehn surgery on a knot $K\subset S^3$ yields $S^1\times S^2$, then $K$ is the unknot. 

The Generalized Property R Conjecture (GPRC-see problem 1.10 in \cite{K3}) asks for a corresponding statement for \R-links. More precisely, is it always possible to transform $L$ into the $n$-component unlink by a sequence of handle-slides?
The Generalized Property R Conjecture remains open even for two-component links. 
Gompf, Scharlemann, and Thompson \cite{GST} studied the case of $2R$-links and proposed several potential counterexamples, most of which remain open. This construction was generalized by Meier and Zupan \cite{generalized-square-knot}  to produce more general family of \R-links with more than two components, $L(p,q;c/d)$, with $(p,q)$ and $(c,d)$ relatively prime integers and $c$ even. 
With these parameters, $L(3,2; 2n/(2n+1))$ is stably equivalent (to be defined below) to the Gompf-Scharlemann-Thompson links $L_n$ in \cite{GST}. 

In this work, we are working with a weaker version: the Stable Generalized Property R Conjecture (stable GPRC-see problem 1.10 in \cite{K3}): If $L$ is an \R-link, then there is an unlink $U$ such that $L\sqcup U$ can be converted by handle-slides to an unlink. Here, $L\sqcup U$ denotes adding a split unlink $U$ to $L$.
Then two links are called \emph{stably handle-slide equivalent} or \emph{stably equivalent} if they can be converted to each other by handle-slides with possible addition of unlink. For background material on handle-slides, see \cite{4-manifolds-Akbulut, 4-manifolds-Gompf}.

While a sufficient condition for a 2-component link to satisfy this stable version is given in \cite{classification},
most of the \R-links mentioned above remain potential counterexamples to the Stable Generalized Property R Conjecture.  

Related to these conjectures, the Andrews-Curtis Conjecture (ACC-see Problem 5.10 in \cite{K3}), posits that any balanced presentation of the trivial group can be converted to the trivial presentation through balanced presentations. If  the GPRC is true, then so is the ACC. There is also a stable version of the ACC that is implied by the Stable GPRC. For more details, see  \cite{GST, generalized-square-knot}.

 In \cite{EZ}, the authors showed that the associated balanced group presentations of a subfamily, $L(3,2;4/d)$, are stably Andrews-Curtis trivial. Their result leaves open whether the links themselves admit a stable handle-slide trivialization, a question they explicitly raised in \cite{EZ}, Question 2. Experimental works in \cite{knotsinfiber, Diao-experiment} established stable handle-slide triviality for some particular values of $d$. In this work, we prove the stable handle-slide triviality for this entire family.

\begin{theorem}
Every link $L(3,2; 4/d)$ is stably handle-slide equivalent to unlink.
\end{theorem}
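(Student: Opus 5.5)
The plan is to realize, at the level of links in $S^3$, the stable Andrews--Curtis trivialization of \cite{EZ}. I would keep track of how each handle-slide acts on the meridional generators and on the relators read off from the components.

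First I would recall the Meier--Zupan construction of $L(p,q;c/d)$. The link arises from a fibered knot, here the torus knot $T(3,2)$, together with curves on a fiber surface whose images under the monodromy produce the other components. The rational slope $c/d$ records a curve on a punctured torus (the fiber of the trefoil), which is obtained from the slope $0/1$ (or $1/0$) by a sequence of Dehn twists. Following \cite{knotsinfiber}, I would view all components except one as lying in fibers. Handle-slides of one fiber curve over another, performed inside a product neighborhood $F\times I$, then act on the fiber curves as band sums realized on the surface. In particular, sliding a curve $\gamma$ over a parallel copy of $\tau$ implements, up to isotopy, the Dehn twist of $\gamma$ along $\tau$ when the framings are surface framings.

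The key reduction is to the continued fraction expansion of $4/d$. Since $c=4$ is fixed and $d$ is odd, I would show that changing $d$ by $\pm 4$ (i.e.\ $d\mapsto d+4k$) is realized by twisting along a curve that is itself a component, or a parallel of a component, of the stabilized link. This is where the stabilization enters: we add an unknotted component, perhaps after an initial slide making it parallel to a fiber curve of slope $1/0$. With that extra component present, the twisting becomes a sequence of handle-slides. The result would be
\[
L(3,2;4/d)\sqcup U \sim L(3,2;4/(d+4))\sqcup U,
\]
which reduces everything to finitely many residues $d \bmod 4$, i.e.\ $d\equiv 1,3 \pmod 4$. Possibly up to mirror or symmetry, these reduce further to small base cases such as $d=1$ or $d=\pm1,3$. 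The base cases I would settle directly by explicit slides, citing \cite{knotsinfiber, Diao-experiment}, where stable triviality was established for particular small $d$. This gives the Theorem for all $d$.

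The main obstacle is the periodicity step. One must check that the twist along the added curve genuinely takes a stabilized copy of $L(3,2;4/d)$ to $L(3,2;4/(d+4))$, with the correct framings and without disturbing the non-fiber component, the one coming from the knot itself. This is exactly where a purely group-theoretic AC-move in \cite{EZ} might fail to lift to handle-slides. To handle it, I would first follow the relators of \cite{EZ} to guess the curve, namely the one whose slides correspond to the conjugation moves used there. Then I would verify geometrically, by drawing the link on a standard genus-one Heegaard picture, that after sliding the extra unknot into a fiber it becomes a $0$-framed curve of slope $1/0$ meeting the $4/d$ curve four times. If the twist does not directly preserve the remaining component, I would slide that component over the added curve $\pm$ times to restore it, and check via framings that the extra unknot splits off again at the end.
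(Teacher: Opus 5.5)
\textbf{There is a genuine gap: the periodicity step.} Your whole argument rests on $L(3,2;4/d)\sqcup U\sim L(3,2;4/(d+4))\sqcup U$, and the mechanism you propose for it does not work as stated.

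First, the set-up is misidentified. The fibered knot is the square knot $Q_{3,2}=3_1\#\overline{3}_1$, whose fiber $F$ has genus two. The slope $c/d$ lives on the quotient $S$ of $\hat F$, a sphere with four cone points of order three. The link $L(3,2;c/d)=V_{c/d}\cup V'_{c/d}$ consists of two of the three lifts, both lying in $F$, so there is no ``non-fiber component.'' The map taking $4/d$ to $4/(d+4)$ is $\tau$, the product of Dehn twists in $F$ along the three lifts $V_{0/1},V'_{0/1},V''_{0/1}$ of the $0/1$ curve. A twist along $1/0$ would fix $d$ and change the numerator instead.

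Second, none of these twisting curves is a component of $L\sqcup U$, and you give no reason $U$ can be slid into such a position. Sliding $U$ over components of $L$ only moves $U$ by an isotopy in $S^3_0(L)\cong\#^2(S^1\times S^2)$. So you would need the pushed-off twisting curves to be trivial knots there. You give no argument for this, and the group-theoretic moves of \cite{EZ} do not provide one.

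Third, even with a $0$-framed component $c$ in place, twisting the other components along $c$ is not automatically a sequence of handle-slides. In $S^3$ it is a Rolfsen twist, which preserves the surgered $3$-manifold but can change the $2$-handlebody: the $(0,0)$-framed Hopf link becomes $(0,1)$-framed. Realizing $\tau_c$ by slides over a surface-framed push-off of $c$, on curves meeting $c$ in several points, needs over/under bookkeeping in $F\times I$ that you have not done. Finally, $c$ must be slid back off to a split unknot, and again there is no argument for that.

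So the ``main obstacle'' you flag is not a verification step. It is the entire content of the theorem, and it is exactly the lifting problem (from Andrews--Curtis moves to handle-slides) left open in \cite{EZ}.

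\textbf{How the paper proceeds instead.} The paper never compares different values of $d$. For each $d$ separately it proves that $V_{4/d}\cup V'_{4/d}$ has tunnel number two, and then applies the Meier--Schirmer--Zupan criterion.

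It adds two explicit tunnels to $V_{4/5}\cup V'_{4/5}$ (resp.\ $V_{4/7}\cup V'_{4/7}$). The simplifying edge-slides in the annulus presentation are crushings of thickened arcs whose boundary copies are either both disjoint from or both transverse to each twisting curve. Hence they commute with $\tau$, which gives $\Gamma_{4/(5+4n)}=\tau^n(\Gamma_{4/5})$.

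In $S^3$, the twists along two of the three curves are then undone by edge-slides of the spine graph. These slides run along edges, or sub-arcs, of the graph that are parallel to the twisting curve. After passing a strand across the remaining curve, that curve sits in a position where every twist visibly gives an unknotted graph.

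Your instinct that a parallel copy of the twisting curve can absorb the twist is the right one. But that parallel copy only becomes available at the level of spines, where the tunnel arcs can be used and edge-slides carry no framing constraint. It is not available at the level of handle-slides of the link itself.
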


Note that the link $L(p,q; c/d)$ is actually well-defined up to handle-slides. Since we are working with stable handle-slide triviality, we are free to view $L(p,q; c/d)$ as a stable equivalence class.
The proof uses a uniform construction of two unknotting tunnels for suitable 2-component representatives of these stable equivalence classes, i.e., these 2-component representatives are tunnel number two 2\R-links. The essential geometric step is to control the tunnel systems under the Dehn twists that generate the family: we show that the resulting graphs can be simplified by isotopies and edge slides to unknotted graphs. Then we apply a  tunnel-number criterion of Meier, Schirmer, and Zupan \cite{classification} to obtain stable handle-slide triviality.

\section*{Tools Used}
The author used AI only for its assistance in generating the TikZ files for the pictures in this work.

\section*{Acknowledgements}
The author would like to thank the following
people for their helpful discussions on this work: Hao Chen, Haoqian Pan, Jiajun Wang, Junhua Wang, Chunxing Yan, Yanqing Zou, and Alex Zupan.


\section{Preliminaries}
\subsection{Heegaard splittings and tunnel numbers}
In this subsection, we briefly recall Heegaard splittings and tunnel numbers. For more thorough exposition, see \cite{Heegaard-survey}.

A \emph{Heegaard splitting} of a closed, orientable three-manifold $Y$ is a decomposition of $Y$ into a pair of three-dimensional handlebodies.
This definition can be
generalized to compact $Y$ by using compression bodies.
\begin{definition} Let $\Sigma$ be a closed surface. A \emph{compression body} is a compact
three-manifold $H$ constructed from $\Sigma \times I$ by attaching 2–handles to $\Sigma \times \{0\}$ and
capping off any resulting 2-sphere boundary components with 3–balls. We define
$\partial_{+}H = \Sigma\times \{1\}$and $\partial_{-}H = \partial H\setminus \partial_{+}H$.
\end{definition}

\begin{definition} Let $Y$ be a compact, connected, orientable three-manifold. A
genus $g$ \emph{Heegaard splitting} of $Y$ is a triple $(\Sigma,H_1,H_2)$ satisfying the following conditions:
\begin{itemize}
    \item $Y = H_1\cup H_2$;
    \item $H_i$ is a compression body for $i=1,2$; 
    \item $\Sigma = H_1\cap H_2 = \partial_{+}H_1= \partial_{+}H_2$ is a closed orientable surface of genus $g$.
\end{itemize}
The surface $\Sigma$ is said to be a \emph{Heegaard surface} for $Y$. We also call $Y = H_1\cup_{\Sigma} H_2$ a Heegaard splitting of $Y$.
\end{definition}

For a link $L$ in $S^3$, a set of properly embedded arcs $\{\gamma_1, \cdots, \gamma_n\}$ in the link complement $E(L)$ is called an \emph{unknotting tunnel system} if removing a regular neighborhood of them from $E(L)$ is a handlebody.

\begin{definition}
    The \emph{tunnel number} $t(L)$ of $L$ is the minimal number of tunnels in a tunnel system for $L$.
\end{definition}
Since $E(L)\setminus \cup_{i =1}^n \eta(\gamma_i)$ is a handlebody, this tunnel construction naturally gives a Heegaard splitting $H_1\cup H_2$ of $S^3$, where $H_1$ is the handlebody formed by adding these arcs to $\eta(L)$ and $H_2$ is a handlebody homeomorphic to $E(L)\setminus \cup_{i =1}^n \eta(\gamma_i)$. The connected graph $\Gamma$ formed by adding the tunnels to $L$ is called a \emph{spine} of the handlebody $H_1$: $H_1 \setminus \eta(\Gamma)$ is homeomorphic to $\partial H_1 \times I$. It is standard fact that isotoping such graph in $S^3$ and sliding an edge of this graph over some other edges, called an \emph{edge-slide}, still produces a spine for a handlebody $H_1'$ and that its complement in $S^3$ is also a handlebody $H_2'$. Moreover,  $H_1'\cup H_2'$ is isotopic to the original Heegaard splitting $H_1\cup H_2$. 
In general, we call a graph $\Gamma$ in $S^3$ \emph{unknotted} if its complement in $S^3$ is a handlebody. So the tunnel number $t(L)$ of the link is the minimal number of arcs needed to attach to $L$ so that the resulting graph $\Gamma$ is unknotted. Moreover, isotoping and edge-sliding of this graph preserve (un)knottedness. 
We call two graphs \emph{equivalent} if they are related by isotopies and edge-slides and we vaguely use the term \emph{transformation} for an isotopy or edge-slide of a graph. 

In later sections, we will prove certain link in $S^3$ has tunnel number two by isotoping and edge-sliding the graph obtained so that it is easily seen to be unknotted. For a helpful change of perspective of this edge-slide operation, we sometimes view it as a crushing process illustrated as in Figure \ref{fig:crushing and sliding}. A formal definition of edge-slide can be found in \cite{Nelsen-trisection, Heegaard-survey}.

\begin{figure}[htbp]
\centering

\begin{fitdiagram}
\begin{tikzpicture}[
    operation/.style={
        inner sep=0pt,
        outer sep=0pt
    },
    operation arrow/.style={
        -{Stealth[length=3mm,width=2mm]},
        line width=0.8pt,
        shorten <=3mm,
        shorten >=3mm
    }
]

\matrix (sequence) [
    matrix of nodes,
    nodes={operation},
    column sep=14mm,
    row sep=14mm
] {
    \includegraphics[width=0.45\textwidth]{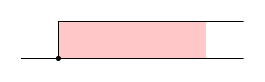}
    &
    \includegraphics[width=0.45\textwidth]{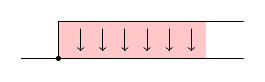}
    \\
    \includegraphics[width=0.45\textwidth]{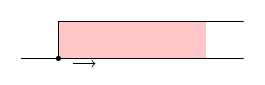}
    &
    \includegraphics[width=0.45\textwidth]{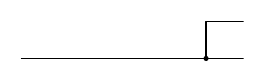}
    \\
};

\draw[operation arrow]
    (sequence-1-1.east) -- (sequence-1-2.west);

\draw[operation arrow]
    (sequence-1-2.south) --node[midway,left] {crushing}(sequence-2-2.north);

\draw[operation arrow]
    (sequence-1-1.south) --(sequence-2-1.north);
\draw[operation arrow]
    (sequence-2-1.east) --node[midway,above] {sliding} (sequence-2-2.west);

\end{tikzpicture}
\end{fitdiagram}

\caption{Illustration for sliding and crushing of a graph.}
\label{fig:crushing and sliding}

\end{figure}

\subsection{The family $L(3,2; 4/d)$}
In \cite{characteizing}, Meier and Zupan used work in \cite{GST, Proposed} to introduce the family $L(3,2; c/d)$ of \R-links. The detailed construction can be found in \cite{characteizing, Proposed}. Here we only briefly recall it and mention some slight distinctions of convention. 

Let $Q_{3,2}$ be the square knot $3_1\# \overline 3_1$, and let $F$ be the fiber surface for $Q_{3,2}$ in $S^3$ with monodromy $\varphi$. It is standard fact that $F$ is a genus two surface. Then $S^3_0(Q_{3,2})$, the closed 3-manifold resulting from 0-surgery on $Q_{3,2}$, is fibered with fiber surface $\hat F$, the closed genus two surface obtained by capping off the boundary component of $F$ with a disk.
Then the closed monodromy map $\hat{\varphi}$ can be easily described as clockwise rotation when $\hat F$ is represented as the quotient of a hexagonally shaped annulus with opposite sides identified in pairs in \cite{Proposed}. We will call such hexagonally shaped annulus \emph{an annulus presentation}.

The $c/d$ parameter comes from the fact that there is a branched covering map $\rho: \hat F\to S$, where $S$ is the 2-sphere with four cone points of order three. Then every simple closed curve on $S$ that avoids the puncture can be represented by an extended rational number $c/d$. For $c/d$ curve in $S$ with $c$ even, it lifts via $\rho$ to three curves $V_{c/d}$, $V'_{c/d}$ and $V''_{c/d}$ in $\hat F$ which are permuted by $\hat \varphi$. Then these curves can be made disjoint from the disk coming from Dehn surgery and thus become embedded in $S^3$. As such the link $Q_{3,2} \cup V_{c/d} \cup V'_{c/d}\cup V''_{c/d}\subset S^3$ is an \R-link, stably equivalent to any 2-component sublink of itself by Lemma 17 of \cite{characteizing}. 
In \cite{generalized-square-knot}, the link $L(3,2;c/d)$ is defined to be $V_{c/d}\cup V'_{c/d}$.
Since we are concerned about whether this link is stably equivalent to unlink, we are free to use any particular element within the stable equivalence class of this link.

As an illustration, we present our dictionary between the hexagonally shaped annulus presentation and $F$ in $S^3$ in Figure \ref{annulus presentation} and Figure \ref{my dictionary}. Concisely, the construction goes briefly as follows: lift the $c/d$ curve on $S$ to annulus presentation via $\rho$; each resulting arc in the annulus presentation can be  classified into the three model arcs (colored yellow, green and blue) in annulus presentation; then translate these arcs back in $F$ and connect them accordingly. For more details, see \cite{Proposed, characteizing, EZ}. We only add a concrete example for $c/d = 1/1$ as an illustration, see Figure \ref{1-1 example}.

\begin{remark}
Our convention only differs by a mirror to that of \cite{Proposed}.
\end{remark}

\begin{figure}
    \centering
    \includegraphics[width=0.4\linewidth]{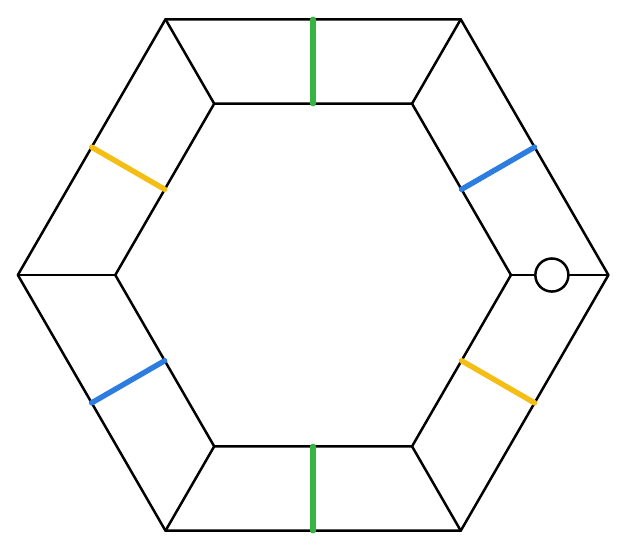}
    \caption{The hexagonally shaped annulus presentation, with yellow curve $V_{0/1}$, green curve $V'_{0/1}$ and blue curve $V''_{0/1}$.}
    \label{annulus presentation}

\end{figure}

\begin{figure}
    \centering
    \includegraphics[width=0.5\linewidth]{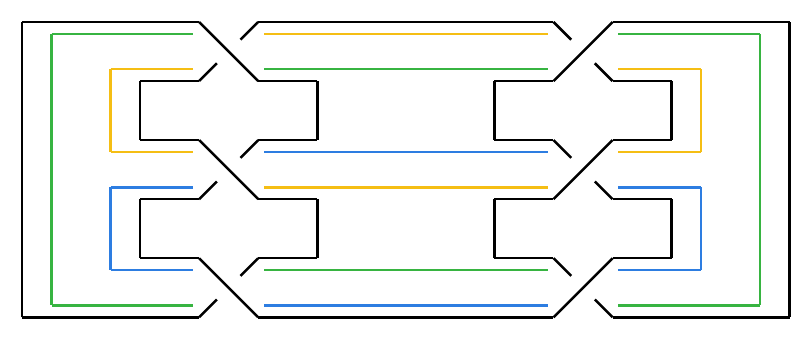}
    \caption{Translating curves on $F$ from annulus presentation.}
    \label{my dictionary}
\end{figure}

\begin{figure}[htbp]
\centering

\begin{fitdiagram}
\begin{tikzpicture}[
    operation/.style={
        inner sep=0pt,
        outer sep=0pt
    },
    operation arrow/.style={
        -{Stealth[length=3mm,width=2mm]},
        line width=0.8pt,
        shorten <=3mm,
        shorten >=3mm
    }
]

\matrix (sequence) [
    matrix of nodes,
    nodes={operation, anchor=center},
    column sep=14mm,
    inner sep=0pt,
    outer sep=0pt
] {
    \includegraphics[width=0.3\textwidth]{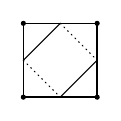}
    &
    \includegraphics[width=0.4\textwidth]{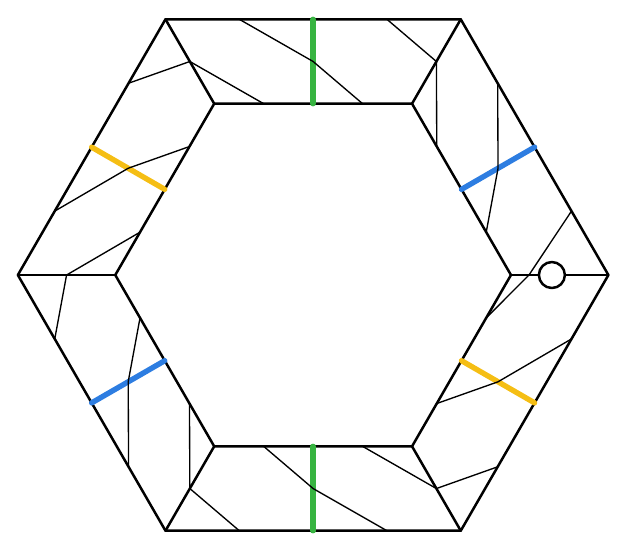}
    \\
};

\node[operation, below=14mm of sequence.south] (result) {
    \includegraphics[width=0.7\textwidth]{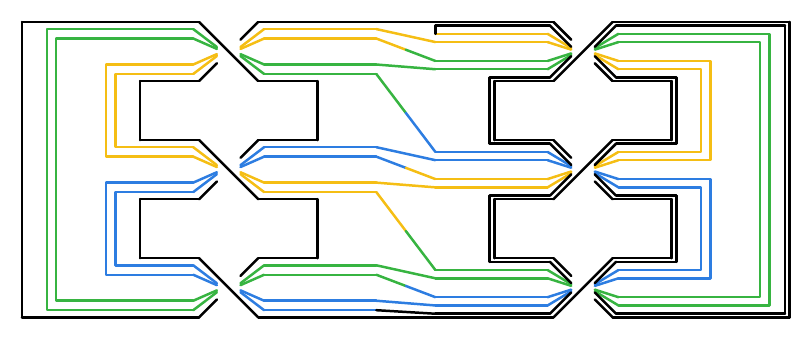}
};

\draw[operation arrow]
    (sequence-1-1.east) -- (sequence-1-2.west);

\draw[operation arrow]
    (sequence-1-2.south)
    -- (result.north);

\end{tikzpicture}
\end{fitdiagram}
\caption{From $1/1$ curve in $S$ to annulus presentation and $F$. The colors are to hint the arc type.}
\label{1-1 example}
\end{figure}

\section{The case $d = 4n+5$}\label{4-5 case}

\begin{figure}
    \centering
    \includegraphics[width=0.5\linewidth]{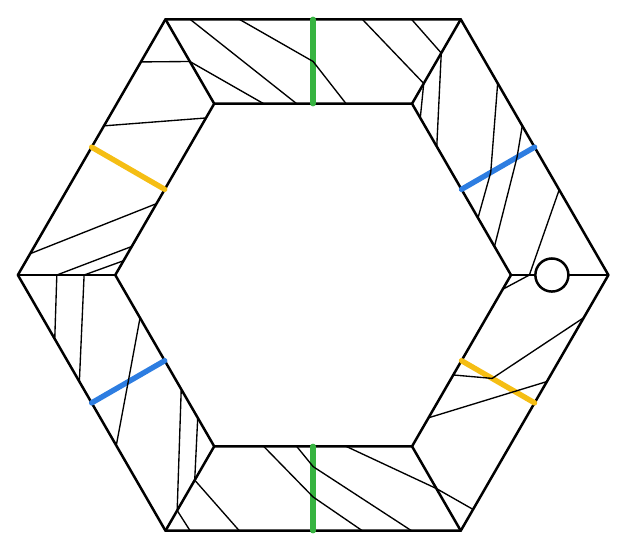}
    \caption{The link $V_{4/5}\cup V'_{4/5}$, with the yellow curve $V_{0/1}$, green curve $V'_{0/1}$ and blue curve $V''_{0/1}$.}
    \label{fig:4/5 link}
\end{figure}

We break the proof of the main theorem into two cases. In this section, we consider the subfamily $L(3,2; 4/(4n+5))$. In the next section, we examine the subfamily $L(3,2; 4/(4n+7))$. The strategies behind both proofs are similar: consider certain 2-component sublinks of $L(3,2; 4/d)$  that are stably equivalent to $L(3,2; 4/d)$ and show that this sublink has tunnel number 2.
By the following theorem, such 2\R-links are stably trivial and so are $L(3,2; 4/d)$:

\begin{figure}[p]
\centering

\begin{fitdiagram}
\begin{tikzpicture}[
    operation/.style={
        inner sep=0pt,
        outer sep=0pt
    },
    operation arrow/.style={
        -{Stealth[length=3mm,width=2mm]},
        line width=0.8pt,
        shorten <=3mm,
        shorten >=3mm
    }
]

\matrix (sequence) [
    matrix of nodes,
    nodes={operation},
    column sep=14mm,
    row sep=14mm
] {
    \includegraphics[width=0.45\textwidth]{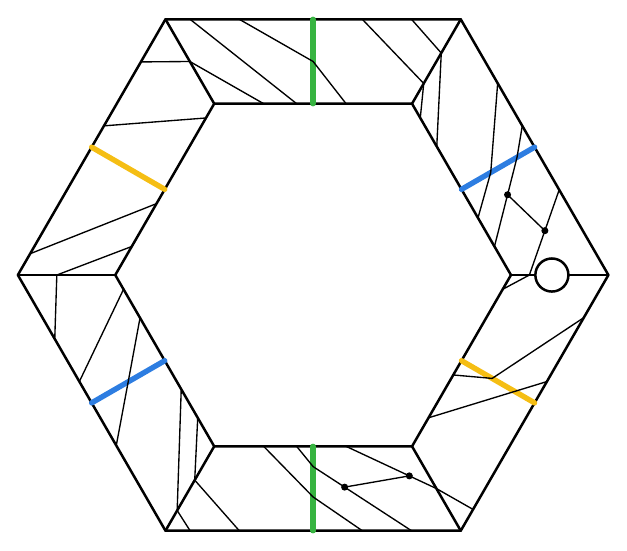}
    &
    \includegraphics[width=0.45\textwidth]{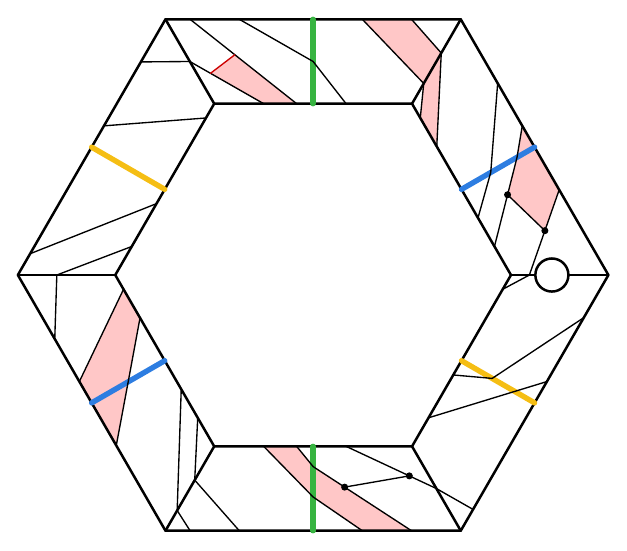}
    \\
    \includegraphics[width=0.45\textwidth]{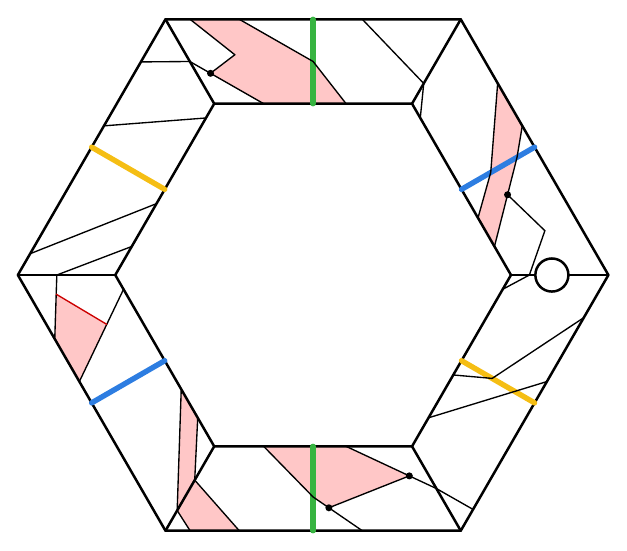}
    &
    \includegraphics[width=0.45\textwidth]{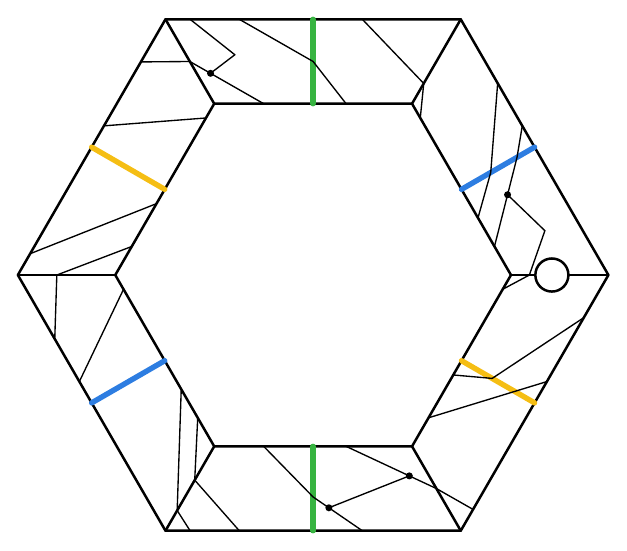}
    \\
    \includegraphics[width=0.45\textwidth]{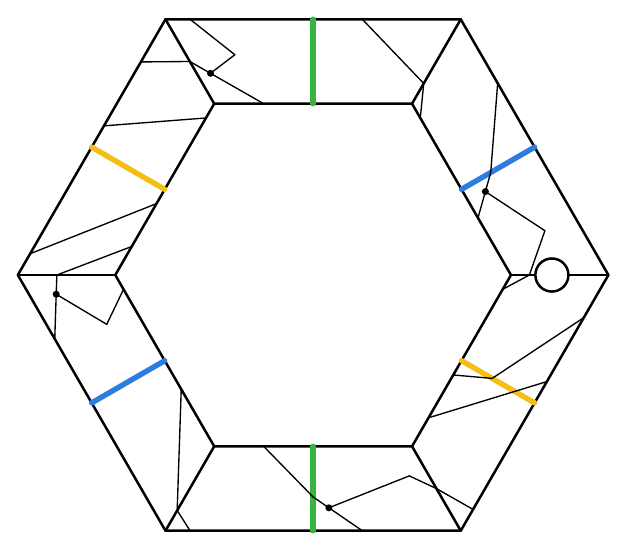}
    &
    \\  
};

\draw[operation arrow]
    (sequence-1-1.east) -- (sequence-1-2.west);

\draw[operation arrow]
    (sequence-1-2.south) --(sequence-2-2.north);

\draw[operation arrow]
    (sequence-2-2.west) --(sequence-2-1.east);

\draw[operation arrow]
    (sequence-2-1.south) --(sequence-3-1.north);


\end{tikzpicture}
\end{fitdiagram}

\caption{A sequence of edge slides and isotopies for 4/5.}
\label{fig:slides for 4/5 in annulus}

\end{figure}

\begin{proposition}[Meier-Schirmer-Zupan \cite{classification}]
Suppose that $L$ is a 2–component link in $S^3$ with an
integral framed surgery to $\#^2 (S^1 \times S^2)$.
 If $L$ has tunnel number 2, then there is a sequence of handle-slides
taking the split union of $L$ with a 0–framed unknot to a
3-component 0–framed unlink.
\end{proposition}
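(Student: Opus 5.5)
We need to sketch a proof of the Meier–Schirmer–Zupan proposition. Let $L=L_1\cup L_2$ have tunnel number two, with tunnels $\gamma_1,\gamma_2$. Then $\Gamma=L\cup\gamma_1\cup\gamma_2$ is a spine of a genus three handlebody $H_1$, and $H_2=S^3\setminus \eta(\Gamma)$ is also a handlebody, so $(\Sigma,H_1,H_2)$ is a genus three Heegaard splitting of $S^3$. The plan is to pass to the surgered manifold and use Waldhausen's theorem on the uniqueness of Heegaard splittings of $\#^k(S^1\times S^2)$.

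\textbf{Step 1: surgery on the link components.} Push $L_1,L_2$ to curves $\ell_1,\ell_2$ in $\Sigma=\partial H_1$, using the surgery framing. One can arrange that each $\ell_i$ meets a meridian disk of $H_1$ (dual to the corresponding edge of $\Gamma$) exactly once. Doing the framed surgery along $L$ amounts to attaching $2$-handles to $H_1$ along $\ell_1,\ell_2$ from outside, or equivalently to replacing part of $H_1$. The surgered manifold $\#^2(S^1\times S^2)$ then inherits a genus three Heegaard splitting $H_1'\cup_\Sigma H_2$. Here $H_2$ is unchanged, and $H_1'$ is again a handlebody: the two meridians of $\eta(L)$ are replaced by the framing curves, which are still primitive curves.

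\textbf{Step 2: standardize the splitting.} By Waldhausen, every Heegaard splitting of $\#^2(S^1\times S^2)$ is standard, i.e. a stabilization of the genus two splitting. So this genus three splitting is standard, with a disk system in $H_1'$ and $H_2$ consisting of two pairs of identical (reducing) curves and one pair of dual curves meeting once. Translated back, the surgery description of $\#^2(S^1\times S^2)$ is encoded by a Heegaard diagram. The key observation is that a link together with its surgery coefficients, drawn as curves on a Heegaard surface of $S^3$ and disjoint from a cut system, yields a Kirby diagram: a 0-handle, $1$-handles from the $H_1$ side, and $2$-handles. Handle-slides of attaching curves over one another on $\Sigma$ are realized by handle-slides of the link in $S^3$. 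The extra third handle, together with the cancellation of the dual pair, is where the added $0$-framed unknot comes from.

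\textbf{Step 3: translate to handle-slides.} Consider the genus three handlebody $H_2$ with three curves $\ell_1,\ell_2,u$ on $\partial H_2$, where $u$ is a curve bounding a disk in $H_1$ that is dual to the tunnel structure, i.e. a split $0$-framed unknot. Attaching $2$-handles along these three curves gives $\#^3(S^1\times S^2)$ minus a ball region. By Step 2, this triple and the standard triple, consisting of the meridians of $H_1$, are related by handle-slides within $\Sigma$ and by automorphisms of $H_2$ (disk slides), which extend over $S^3$ as isotopies. This uses the fact that the genus three splitting of $S^3$ is standard, again by Waldhausen. Curves bounding disks in $H_1$ form a $0$-framed unlink in $S^3$. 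Therefore $L\sqcup U$ handle-slides to the $3$-component $0$-framed unlink.

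\textbf{Main obstacle.} The hardest part is Step 3. It must be shown that the complementary sets of curves determining $\#^3(S^1\times S^2)$ in a standard splitting are related purely by slides on one side, without needing to slide across the other handlebody, and that each such slide on $\Sigma$ is a genuine handle-slide of framed links in $S^3$ with the framings preserved. I would first handle this with the known classification of curves $\{c_i\}$ on the boundary of a handlebody such that $2$-handle surgery yields a handlebody or $\#^k(S^1\times S^2)$. Then I would verify that the surface framing matches the $0$-framing, which holds because the framings come from $\Sigma$.
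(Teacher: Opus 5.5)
The paper does not prove this proposition; it is quoted from \cite{classification}, where it comes out of the trisection picture. On its own terms, your Steps 1 and 2 are sound. Take $u$ to be the cocore of a tunnel whose removal leaves $\Gamma$ connected. Then $\{\ell_1,\ell_2,u\}$ is a cut system for the surgered handlebody $H_1'$, $(\Sigma;H_2,H_1')$ is a genus-three splitting of $\#^2(S^1\times S^2)$, and Waldhausen standardizes it.

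There is a genuine gap in Step 3, which you yourself flag as the main obstacle; it is the entire content of the proposition. As written it fails for three reasons.

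\emph{The target is unreachable.} Slides among $\ell_1,\ell_2,u$ never change $H_1'$, so these curves can never become meridians of $H_1$. Together with $H_2$ they must still give $\#^2(S^1\times S^2)$, not $S^3$. For the same reason, attaching $2$-handles to $H_2$ along them yields $\#^2(S^1\times S^2)$ minus a ball, not $\#^3$.

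\emph{The moves are not legitimate.} An automorphism of $H_2$ such as a disk slide generally does not extend over $H_1$. So it does not act on the framed link in $S^3=H_2\cup_\Sigma H_1$ by an isotopy.

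\emph{Two separate applications of Waldhausen do not suffice.} Standardizing $(H_2,H_1)$ and $(H_2,H_1')$ separately does not put all three handlebodies in standard position at once. That simultaneous statement is essentially standardness of the $(3;0,1,2)$-trisection they form, which is what has to be proved.

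Two ideas are missing. First, no automorphism of $\Sigma$ is needed. Second, the one curve Waldhausen leaves uncontrolled needs a separate argument.

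Waldhausen gives cut systems $\alpha^*$ of $H_2$ and $\gamma^*$ of $H_1'$ with $\gamma^*_1=\alpha^*_1$, $\gamma^*_2=\alpha^*_2$ and $|\gamma^*_3\cap\alpha^*_3|=1$. Any two cut systems of $H_1'$ are related by slides and isotopies in $\Sigma$. For surface-framed curves these are handle slides and isotopies of the framed link in the \emph{fixed} $S^3=H_2\cup_\Sigma H_1$, so $L\sqcup U$ slides to $\gamma^*$. Since $\gamma^*_1,\gamma^*_2$ bound disjoint disks in $H_2$ missing $\gamma^*_3$, we get $\gamma^*=U_2\sqcup K$ with $U_2$ a $0$-framed unlink.

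But $K=\gamma^*_3$ is only primitive in $H_2$, so a priori it could be any knot of tunnel number at most two. Handle slides preserve the surgered manifold, so $\#^2(S^1\times S^2)\# S^3_f(K)\cong\#^3(S^1\times S^2)$. Kneser--Milnor then gives $S^3_f(K)\cong S^1\times S^2$, homology forces $f=0$, and Gabai's Property R shows $K$ is the unknot. Nothing in your plan plays the role of this last step, and the third component cannot be unknotted without some such input.
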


For the remainder of this section, we will label the components in the surface $ {F}$ in Figure \ref{fig:4/5 link} as follows:
\begin{enumerate}
    \item Three colored curves will be $V_{0/1}$, $V'_{0/1}$ and $V''_{0/1}$.
    \item The thin black curves will be $V_{4/5}$ and $V'_{4/5}$. For the purpose of our theorem, we need not distinguish which component exactly is $V_{4/5}$.
\end{enumerate}

Let $ {\tau}:   F\to   F$ be product of the right-handed Dehn twists in the annulus presentation along $ {V_{0/1}}\cup  {V'_{0/1}} \cup  {V''_{0/1}}$.

In Lemma 4.7 and 4.8 of \cite{generalized-square-knot} and Lemma 5 of \cite{EZ}, it was shown the family of link $\{ V_{4/d} \cup V'_{4/d}\}$ has the following inductive relation via $\tau$.
\begin{lemma}\label{Dehn twists of link}
$$
 \tau^n(V_{4/d}) = V_{4/(d+4n)} \text{ and }  \tau^n(V'_{4/d}) = V'_{4/(d+4n)}.
$$
\end{lemma}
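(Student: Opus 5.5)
The lemma asserts $\tau^n(V_{4/d}) = V_{4/(d+4n)}$ and likewise for $V'$. The plan is to push everything down to the quotient sphere $S$ via the branched cover $\rho:\hat F\to S$ and verify the identity there. By induction on $n$ it suffices to treat $n=1$, i.e.\ to show $\tau(V_{4/d}) = V_{4/(d+4)}$ and $\tau(V'_{4/d})=V'_{4/(d+4)}$.

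\textbf{Step 1: $\tau$ descends to $S$.} The three curves $V_{0/1}, V'_{0/1}, V''_{0/1}$ are exactly the three lifts of the $0/1$ curve in $S$, and they are permuted by $\hat\varphi$. Consequently the product $\tau$ of right-handed twists along all three is $\hat\varphi$-equivariant. We then check that $\rho$ restricted to each lift is a homeomorphism onto a neighborhood of the $0/1$ curve. The point is that $0/1$ has even numerator, so its preimage splits into three components rather than one triple cover. Given this, $\tau$ is a lift of the single right-handed Dehn twist $T_{0/1}$ on $S$ along the $0/1$ curve, and so $\rho\circ\tau = T_{0/1}\circ\rho$ away from cone points.

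\textbf{Step 2: action on slopes.} In the standard identification of simple closed curves on the four-punctured (here four-cone-pointed) sphere with extended rationals, the twist along $0/1$ acts on slopes by a matrix of the form
\[
c/d\mapsto c/(d\pm 2c).
\]
The factor $2$ comes from the pillowcase double cover by the torus: a twist along $0/1$ in $S$ lifts to the square of the corresponding twist on the torus. With $c=4$ this sends $4/d$ to $4/(d\pm 8)$, which is not the claimed $4/(d+4)$. So the correct statement is most likely that the torus twist itself, $c/d\mapsto c/(d+c)$, is what $\tau$ induces. I would therefore recheck whether the lifts of the $0/1$ curve actually correspond to half-twists, i.e.\ whether $\rho$ restricted to each lift is a double cover of the $0/1$ curve. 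That determines the coefficient, and I would fix the sign convention (right-handed, together with the mirror convention of the Remark) so that the coefficient is $+1$. This gives $4/d\mapsto 4/(d+4)$.

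\textbf{Step 3: identify the labeling of lifts.} Since $\tau$ lifts $T_{0/1}$, it maps the set $\rho^{-1}(4/d)$ of lifts to $\rho^{-1}(4/(d+4))$. It remains to see that $V$ goes to $V$ and $V'$ goes to $V'$, rather than being permuted. For this I would use that $\tau$ is supported near the three fixed curves and that the labeling of lifts is determined by the model arc types (yellow, green, blue) meeting a fixed reference region of the annulus presentation disjoint from the twisting curves. Near that region $\tau$ is the identity, so labels are preserved.

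\textbf{Main obstacle.} The hard part is the Step 2 bookkeeping: the local degree of $\rho$ along the $0/1$ lifts and the resulting twist coefficient. I would settle it concretely by computing $\tau(V_{4/5})$ in Figure \ref{fig:4/5 link} via the annulus presentation and comparing it with the known picture of $V_{4/9}$. This is in effect the content of Lemmas 4.7 and 4.8 of \cite{generalized-square-knot} and Lemma 5 of \cite{EZ}, which we may ultimately cite.
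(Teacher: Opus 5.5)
The paper does not prove this lemma; it quotes Lemmas 4.7--4.8 of \cite{generalized-square-knot} and Lemma 5 of \cite{EZ}. Your descent strategy is a sensible route, but Step 2 is a genuine gap. You compute $4/d\mapsto 4/(d\pm 8)$, notice that this conflicts with the claim, and then propose repairs that cannot work.

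First, the lifts of the $0/1$ curve cannot double cover it. As you argued correctly in Step 1, its preimage under the $3$-fold cyclic cover $\rho$ has three components permuted by the deck group, so each maps homeomorphically. That is exactly why $\tau$ covers the full twist $T_{0/1}$ on $S$. Second, no sign or handedness convention can turn the coefficient $2$ into $1$. Since $i(T_\alpha(\beta),\beta)=i(\alpha,\beta)^2$, taking $\alpha=0/1$ and $\beta=1/0$ forces $T_{0/1}(1/0)=\pm 1/2$ in standard pillowcase coordinates. As written, your argument ends in an unresolved contradiction, and only the fallback citation remains.

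The missing point is that the labels $c/d$ cannot be standard pillowcase slopes on $S=\hat F/\langle\hat\varphi^2\rangle$. Given your Step 1, that is the only way the lemma can hold. The splitting criterion you quote confirms this. A $\mathbb{Z}/3$ cover of the sphere fully branched at four points has cone-point monodromies $1,1,2,2$, the only choice summing to $0$. So two of the three pairings of the cone points, that is, two of the three parity classes of standard slopes, have three lifts. The paper instead uses the one-class condition ``$c$ even''.

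A parametrization compatible with both that condition and the lemma is the following: the label $c/d$ is the standard slope $c/(2d)$ on $S$. Equivalently, it is the standard slope $c/d$ on $\hat F/\langle\hat\varphi\rangle$, a sphere with cone points of orders $2,2,3,3$ whose $0/1$ curve separates the two order-$2$ points from the two order-$3$ points. You would need to confirm that this is the convention of \cite{generalized-square-knot}.

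With this convention, your Step 1 stands and Step 2 takes one line. $T_{0/1}$ sends the standard slope $c/(2d)$ to $c/(2d\pm 2c)$, so it sends the label $c/d$ to $c/(d\pm c)$. Concretely, $\rho(V_{4/d})$ has standard slope $2/d$, and one twist adds $4$ to the denominator. Your instinct about half-twists was pointing the right way, but the half-twist lives one quotient further down. On $\hat F/\langle\hat\varphi\rangle$, $\tau$ covers the half-twist exchanging the two order-$2$ cone points, while $\rho$ still has degree $1$ on each lift. The sign is then fixed by the right-handed and mirror conventions.

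Finally, Step 3 has content only once you specify the rule assigning the names $V,V',V''$ to the lifts of each slope. Since $\tau$ commutes with $\hat\varphi$, this is a matter of convention. For the paper it is immaterial, because any two-component sublink of $Q_{3,2}\cup V_{c/d}\cup V'_{c/d}\cup V''_{c/d}$ lies in the same stable class.
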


Next, we introduce two unknotting tunnels for the link $V_{4/5}\cup V'_{4/5}$, drawn in the left picture in the first row in Figure \ref{fig:slides for 4/5 in annulus}, and simplify the resulting graph using isotopies and edge-slides in this hexagonally shaped annulus  to obtain a graph $\Gamma_{4/5}$, see Figure \ref{fig:slides for 4/5 in annulus}. These simplifications consist of edge-slides and isotopies within the Seifert surface $F$ for the square knot. The simplifications are illustrated using shaded parallel regions in this annulus and we are freely changing perspective for the slide, as in Figure \ref{fig:crushing and sliding}. By Lemma \ref{Dehn twists of link}, the link $V_{4/(5+4n)} \cup V'_{4/(5+4n)} =   \tau^n (V_{4/5}\cup V'_{4/5} )$. So the same picture, namely the left picture in the first row in Figure \ref{fig:slides for 4/5 in annulus}, can be used to represent adding two tunnels to the link $V_{4/(5+4n)} \cup V'_{4/(5+4n)}$. The next Lemma shows that the result of simplifying this graph similarly, denoted by $\Gamma_{4/(5+4n)}$, is the same as Dehn twisting the graph $\Gamma_{4/5}$.

\begin{lemma}\label{Dehn twisting graph}
$$
 \tau^n(\Gamma_{4/5}) = \Gamma_{4/(5+4n)}
$$
\end{lemma}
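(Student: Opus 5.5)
The plan is to use naturality: $\tau$ is a homeomorphism of $F$, and it should be extended to an ambient map that commutes with every step of the simplification. Concretely, $\tau^n$ is a product of Dehn twists along curves $V_{0/1},V'_{0/1},V''_{0/1}$ lying on $F\subset S^3$. I would view the graph $\Gamma_{4/5}$, together with the two tunnels and every intermediate graph in Figure \ref{fig:slides for 4/5 in annulus}, as a graph embedded in $F$, or in a product neighborhood $F\times[-1,1]$ where crossings are recorded. Then $\tau^n$ acts on each such graph by a self-homeomorphism of $F\times[-1,1]$, namely $\tau^n\times \mathrm{id}$.

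First I will fix the initial graph $G_{5+4n}$: the link $V_{4/(5+4n)}\cup V'_{4/(5+4n)}$ together with its two tunnels. By Lemma \ref{Dehn twists of link}, the link components equal $\tau^n(V_{4/5})$ and $\tau^n(V'_{4/5})$. The tunnels for $d=5+4n$ are defined as the images under $\tau^n$ of the tunnels for $d=5$; this is the sense in which ``the same picture'' represents both. Hence $G_{5+4n}=\tau^n(G_{4/5})$ as graphs in $F\times[-1,1]$.

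Next I will check that each elementary transformation in the sequence is natural with respect to $\tau^n$. An isotopy $h_t$ of a graph in $F\times[-1,1]$ is carried to the isotopy $\tau^n h_t\tau^{-n}$. An edge-slide of an edge $e$ along a path $\beta$ in the graph, performed inside a shaded parallel region $R\subset F$, is carried to the edge-slide of $\tau^n(e)$ along $\tau^n(\beta)$ inside $\tau^n(R)$. The ``similar'' simplification of $G_{5+4n}$ is then defined as exactly this conjugated sequence, so by induction on the steps each intermediate graph for $5+4n$ is the $\tau^n$-image of the corresponding one for $5$. The final graph therefore satisfies $\Gamma_{4/(5+4n)}=\tau^n(\Gamma_{4/5})$.

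The main obstacle is making sure the simplification for general $n$ is genuinely the transported one and not merely a visually similar picture in the annulus. Some slides in Figure \ref{fig:slides for 4/5 in annulus} use shaded regions that may meet the twisting curves $V_{0/1},V'_{0/1},V''_{0/1}$. After twisting, such a region becomes a long spiraling band rather than a small parallel strip. I would handle this by insisting that every slide be specified intrinsically, by the arcs of the graph it runs parallel to, rather than by its planar shape, so that $\tau^n$ carries it to a legitimate slide. It also has to be checked that the resulting graph sits in $S^3$ the same way, i.e.\ that the embedding $F\times[-1,1]\hookrightarrow S^3$ is used consistently, since $\tau$ itself does not extend to $S^3$ and only the graph-level identities are claimed.
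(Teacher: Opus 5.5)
Your proof is correct and is essentially the paper's argument. The paper also treats each slide of the annulus simplification as crushing a shaded band $R$ in $F$, whose two boundary strands are, for each twisting curve, either both disjoint from it or both transverse to it, so $\tau^n(R)$ is again a parallel region and crushing commutes with $\tau^n$; this is your naturality of edge-slides under the homeomorphism $\tau^n$, and your ``spiraling band'' worry is resolved by exactly this observation, while the extra bookkeeping in $F\times[-1,1]$ is harmless but unnecessary since all these slides take place inside $F$.
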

\begin{proof}
    For the graph obtained by adding two tunnels to the link $V_{4/5}\cup V'_{4/5}$ in the annulus presentation, we consider
    the effect of doing edge-slides indicated by the shaded parallelity regions in Figure \ref{fig:slides for 4/5 in annulus}.  These shaded regions should be thought of as connected, after the identification of opposite edges, and thus can be thought of as a thickened arc in this annulus. Then the indicated sliding can be viewed as crushing this long shaded region into the core arc. Now observe that both copies in the boundary of this thickened arc are either both disjoint or transverse to each Dehn twisting curve. It follows that the crushing process is preserved by Dehn twists. So first doing Dehn twist and then crushing the shaded regions is the same as first crushing the shaded regions and then doing Dehn twist.

\end{proof}

Thus, for each $n\ge 0$, in order to show that the link $V_{4/(5+4n)} \cup V'_{4/(5+4n)}$ has tunnel number 2, it suffices to show that $\Gamma_{4/(5+4n)}$ is unknotted in $S^3$. Having obtained a uniform form of the graph $\Gamma_{4/(5+4n)}$ in the annulus, we now translate it in $S^3$.

In the first picture of Figure \ref{fig:4/5 graph simplify in S3}, we have transformed the graph $\Gamma_{4/5}$ from the annulus into $S^3$, with three Dehn twisting curves drawn. It is worth noting that, in $S^3$ each Dehn twisting becomes left-handed twist, just by following the construction. Moreover, these curves intersect the graph in several points and $\Gamma$ lies in the Seifert surface  $F$ of $Q_{3,2}$. By Lemma \ref{Dehn twisting graph}, the graph  $\Gamma_{4/(5+4n)}$ is the result of twisting $\Gamma_{4/5}$ $n$ times along these curves inside the Seifert surface of $Q_{3,2}$.

In the sequel, we simplify such graph in $S^3$. The idea is to simplify the graph away from the intersection points with each Dehn twisting curve as much as possible. When it is inevitable to consider the effect of the Dehn twists, we show that the twists can, in  many cases, be undone by some further graph simplification process. This allows us to throw some Dehn twisting curves away, i.e., twisting the graph along them does not affect (un)knottedness. Finally, we get a graph with a single Dehn twisting curve in a simple enough position that it is obvious all resulting graphs after twisting are unknotted.  

Firstly, several obvious transformations can be done to simplify the graph as in Figure \ref{fig:4/5 graph simplify in S3}. Note that these transformations are still contained in the Seifert surface. Then we encounter the first circumstance where sliding happens outside the Seifert surface as in Figure \ref{fig:4/5 graph simplify in S3 1}, after which the graph lies back into the Seifert surface again and an obvious simplification appears.

\begin{figure}[htbp]
\centering

\begin{fitdiagram}
\begin{tikzpicture}[
    operation/.style={
        inner sep=0pt,
        outer sep=0pt
    },
    operation arrow/.style={
        -{Stealth[length=3mm,width=2mm]},
        line width=0.8pt,
        shorten <=3mm,
        shorten >=3mm
    }
]

\matrix (sequence) [
    matrix of nodes,
    nodes={operation},
    column sep=14mm,
    row sep=14mm
] {
    \includegraphics[width=0.45\textwidth]{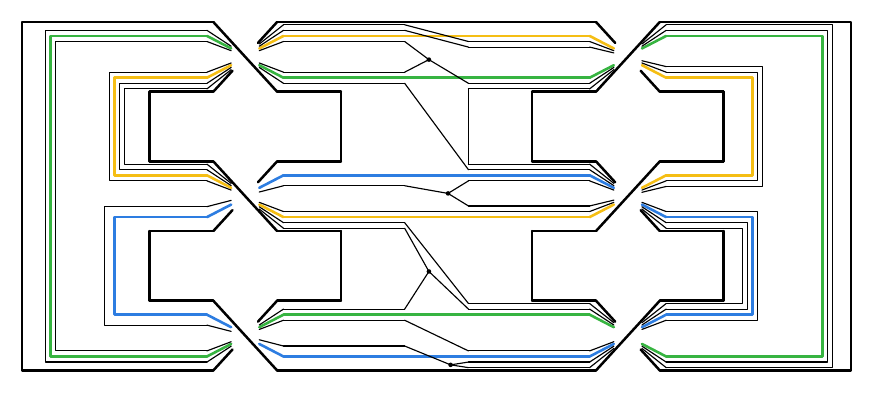}
    &
    \includegraphics[width=0.45\textwidth]{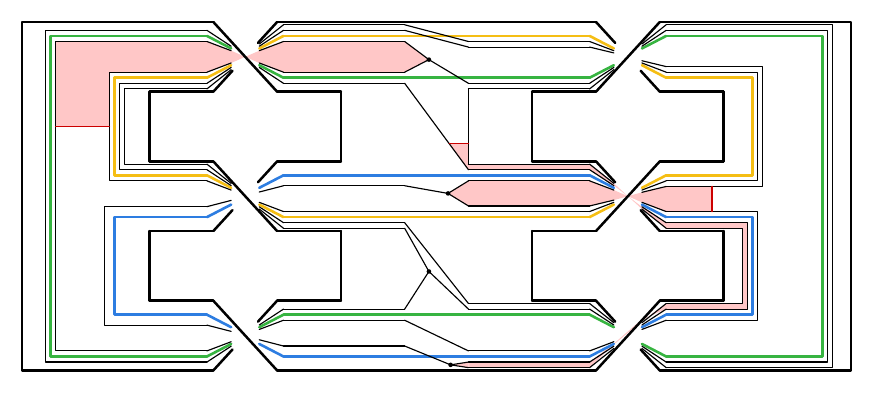}
    \\
    &
    \includegraphics[width=0.45\textwidth]{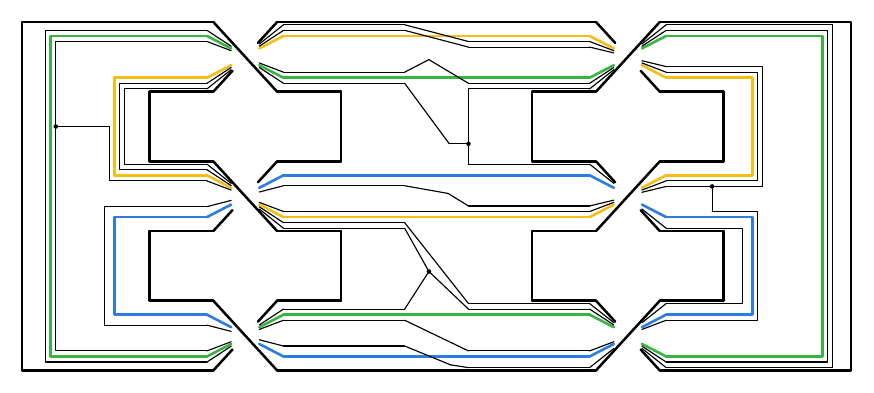}
    \\
  };

\draw[operation arrow]
    (sequence-1-1.east) -- (sequence-1-2.west);

\draw[operation arrow]
    (sequence-1-2.south) -- (sequence-2-2.north);

\end{tikzpicture}
\end{fitdiagram}

\caption{A sequence of edge slides and isotopies inside $F$.}
\label{fig:4/5 graph simplify in S3}

\end{figure}

\begin{figure}[htbp]
\centering

\begin{fitdiagram}
\begin{tikzpicture}[
    operation/.style={
        inner sep=0pt,
        outer sep=0pt
    },
    operation arrow/.style={
        -{Stealth[length=3mm,width=2mm]},
        line width=0.8pt,
        shorten <=3mm,
        shorten >=3mm
    }
]

\matrix (sequence) [
    matrix of nodes,
    nodes={operation},
    column sep=14mm,
    row sep=14mm
] {
    \includegraphics[width=0.45\textwidth]{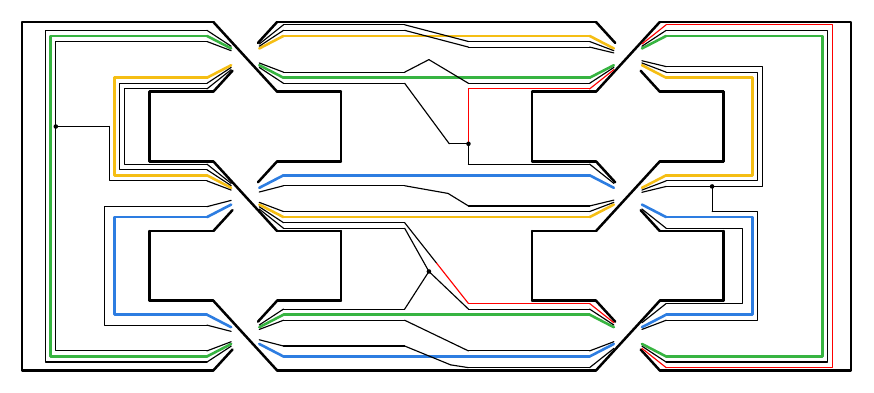}
    &
    \includegraphics[width=0.45\textwidth]{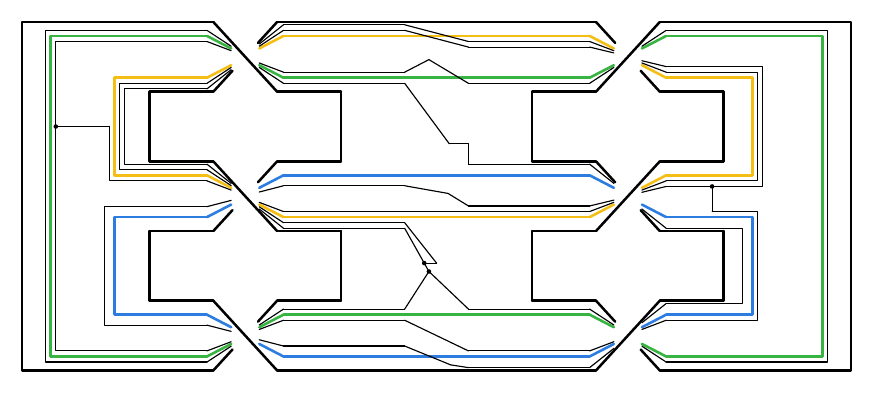}
    \\
  };

\draw[operation arrow]
    (sequence-1-1.east) -- (sequence-1-2.west);

\end{tikzpicture}
\end{fitdiagram}

\caption{A  slide outside $F$: the red arc can be slid, following the edge on the left that goes under every other edge of this graph, to lie in $F$ again.}
\label{fig:4/5 graph simplify in S3 1}

\end{figure}

After such obvious simplification, we arrive at a situation where a further simplification seems to interfere with a twisting curve as in Figure \ref{4/5 graph first twist}. We claim that  we can actually simply pass one strand over this twisting curve:

\begin{figure}[htbp]
\centering

\begin{fitdiagram}
\begin{tikzpicture}[
    operation/.style={
        inner sep=0pt,
        outer sep=0pt
    },
    operation arrow/.style={
        -{Stealth[length=3mm,width=2mm]},
        line width=0.8pt,
        shorten <=3mm,
        shorten >=3mm
    }
]

\matrix (sequence) [
    matrix of nodes,
    nodes={operation},
    column sep=14mm,
    row sep=14mm
] {
    \includegraphics[width=0.45\textwidth]{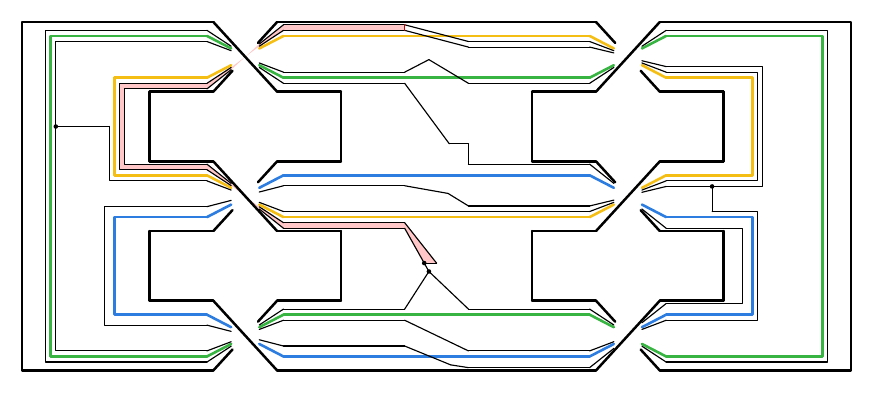}
    &
    \includegraphics[width=0.45\textwidth]{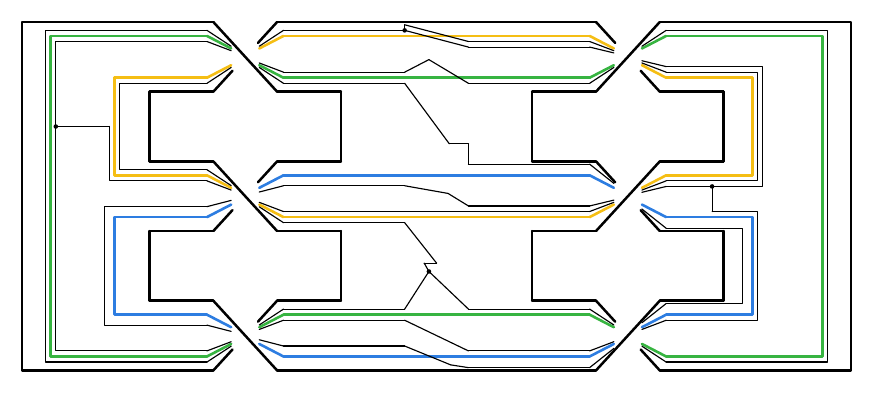}
    \\
    &
    \includegraphics[width=0.45\textwidth]{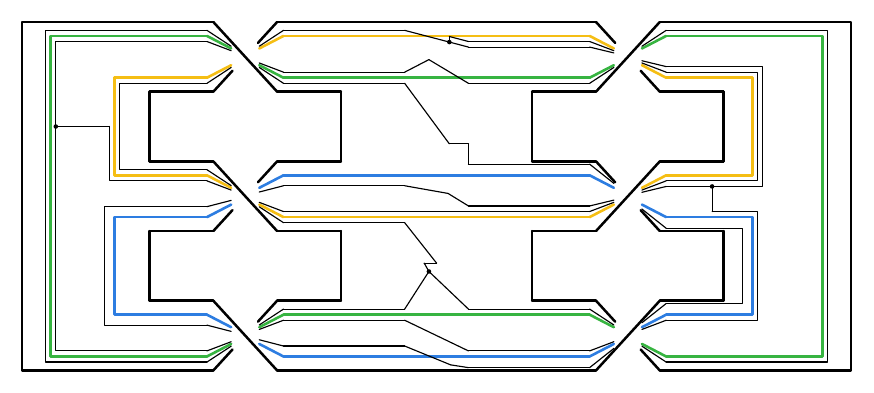}
    \\
  };

\draw[operation arrow]
    (sequence-1-1.east) -- (sequence-1-2.west);

\draw[operation arrow]
    (sequence-1-2.south) -- node[midway,right] {Claim \ref{first twist}}(sequence-2-2.north);

\end{tikzpicture}
\end{fitdiagram}

\caption{A sequence of edge slides and isotopies.}
\label{4/5 graph first twist}

\end{figure}

\begin{figure}[htbp]
\centering

\begin{fitdiagram}
\begin{tikzpicture}[
    operation/.style={
        inner sep=0pt,
        outer sep=0pt
    },
    operation arrow/.style={
        -{Stealth[length=3mm,width=2mm]},
        line width=0.8pt,
        shorten <=3mm,
        shorten >=3mm
    }
]

\matrix (sequence) [
    matrix of nodes,
    nodes={operation},
    column sep=14mm,
    row sep=14mm
] {
    \includegraphics[width=0.3\textwidth]{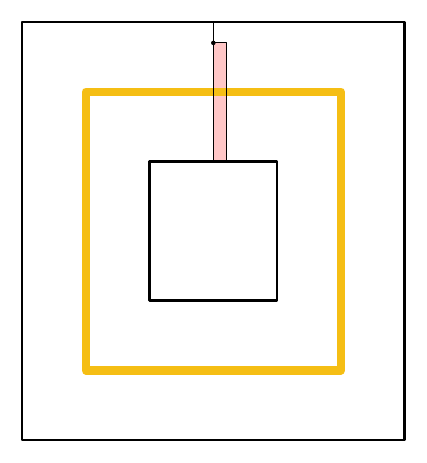}
    &
    \includegraphics[width=0.3\textwidth]{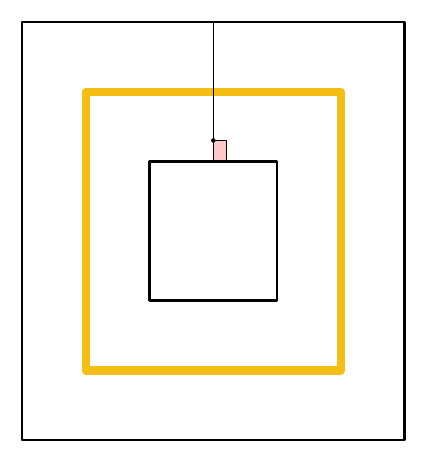}
    \\
};

\draw[operation arrow]
    (sequence-1-1.east) -- node[midway,above] {sliding}(sequence-1-2.west);


\end{tikzpicture}
\end{fitdiagram}

\caption{An edge slide located in an annulus.}
\label{fig:first twist local}

\end{figure}

\begin{claim}\label{first twist}
The graph in the first row on the right is equivalent to the graph in the second row in Figure \ref{4/5 graph first twist}.
\end{claim}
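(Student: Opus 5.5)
The claim asserts that a strand of the graph can be passed across one of the Dehn twisting curves. This is only meaningful for the whole family: we must show that the two graphs remain equivalent after applying $\tau^n$ for every $n$, not merely that they are equivalent as embedded graphs. So the plan is to reduce everything to a local model inside a regular neighborhood of the twisting curve $C$ in $F$. This neighborhood is an annulus $A$. Inside $A$ we will exhibit an edge slide, the one in Figure \ref{fig:first twist local}, which is \emph{supported in} $A$ and commutes with the twist along $C$.

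First we isolate the configuration. In the first-row right picture of Figure \ref{4/5 graph first twist}, $C$ meets the graph in a small number of points. I will isotope the graph within $F$ so that two things hold. The strand $s$ to be moved crosses $A$ as a spanning arc. Another edge $e$ of the graph also crosses $A$ as a parallel spanning arc, and its endpoints on the two sides are connected through the graph in a way that lets $s$ be slid along $e$. Concretely, I expect $e$ to be the edge parallel to $s$ on the other side of $C$. The point is that the portion of $s$ between its two intersection points with $\partial A$ together with the corresponding portion of $e$ should cobound a rectangle in $A$, possibly wrapping around the core.

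Second, I perform the local slide of Figure \ref{fig:first twist local}: slide the end of $s$ along $e$ across $A$. In the crushing viewpoint of Figure \ref{fig:crushing and sliding}, we crush the thin band between $s$ and $e$. As in the proof of Lemma \ref{Dehn twisting graph}, both boundary arcs of this band are spanning arcs of $A$, hence each is transverse to $C$ in the same way. Therefore twisting $n$ times along $C$ sends the band to a band of the same type, and crushing commutes with $\tau^n$.

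The key point is that after the slide, $s$ no longer crosses $A$. Equivalently, the twist applied to the slid strand is absorbed by the twist applied to $e$, and the spiraling of $s$ around $C$ can be unwound by sliding back along the twisted $e$. Thus, for every $n$, the twisted first-row graph is equivalent to the twisted second-row graph, in which $s$ has been passed across $C$. The remaining twisting curves are disjoint from $A$, so they are unaffected.

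The main obstacle is verifying that the band really is embedded in $F$ and disjoint from the other edges and from the other twisting curves. The picture lives in $S^3$, where $F$ is not planar and the relevant edges may cross other parts of the graph. If such a crossing occurs, I would first try to slide it out of the band along other edges, staying within $F$, before doing the local move. As a fallback, I would check the claim in the annulus presentation, where parallelism of arcs is transparent.
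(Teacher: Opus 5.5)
Your proposal is correct and is essentially the paper's argument. You pass to an annular neighborhood of the twisting curve in $F$, where the graph meets it in two parallel spanning arcs, and note that $\tau^n$ carries the parallel band to a twisted band, so crushing (sliding) along it commutes with the twist and passes one strand across the curve. Your ``main obstacle'' is also lighter than you fear: $\tau^n$ is a homeomorphism of $F$, so any slide along a band embedded in $F$ (which the paper simply reads off the figure) is carried to a legitimate slide of $\tau^n(\Gamma)$, and the band need not avoid the other twisting curves.
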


\begin{proof}
This is most easily seen by looking at an annulus tubular neighborhood of this twisting curve in $F$, which is depicted in Figure \ref{fig:first twist local}. Then this graph intersects such annulus in two parallel arcs. The left-handed Dehn twist will simply twist the shaded parallel region around the yellow curve. It follows that sliding the right strand along such twisted region gives the same graph obtained by twisting the graph on the right in Figure \ref{fig:first twist local}.  So  $  \tau^n (\Gamma_{4/5})$ can be edge-slid via this twisted long shaded region within this annulus to pass one strand over the yellow twisting curve.
\end{proof}

Now, after passing through the twisting curve, a new simplification appears as in Figure \ref{fig:third-claim} and we  encounter another crossing of the graph with a twisting curve. We claim that such Dehn twist can be undone:

\begin{claim}\label{second twist}
The graph in the first row on the right is equivalent to the graph in the second row in Figure \ref{fig:third-claim}.
\end{claim}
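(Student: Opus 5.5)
The plan is to localize, exactly as in the proof of Claim \ref{first twist}, to an annular neighborhood $A$ in $F$ of the twisting curve in question, here the second of $V_{0/1}, V'_{0/1}, V''_{0/1}$ that the graph meets. After $n$ left-handed twists in $S^3$, the graph $\tau^n(\Gamma_{4/5})$, simplified as in Figure \ref{fig:third-claim}, meets $A$ in a small number of arcs crossing the core. I expect two such arcs, but possibly an arc together with a vertex lying near $A$. Each crossing arc winds $n$ times around the core. The aim is to show that this winding can be removed by edge-slides, so the resulting graph is the one drawn with no twisting along this curve.

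The key steps are as follows.
\begin{enumerate}
\item Identify in the picture a cycle, or a path closed up through the rest of the graph, that runs once around $A$ parallel to the core. This plays the role of the yellow curve in Figure \ref{fig:first twist local}. After the simplification of Figure \ref{fig:third-claim}, some edge of the graph should now be parallel in $F$ to the twisting curve. I would look for an edge bounding, together with the core, a shaded parallelity region inside $A$.
\item Slide the end of one crossing strand $n$ times around this parallel loop, each time in the direction opposite to the twist. Each slide is an edge-slide along an edge isotopic in $F$ to the core of $A$. Its effect on the strand is a Dehn twist of that strand's germ inside $A$, so $n$ slides cancel the $n$ left-handed twists. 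If two strands cross $A$, I would first crush the parallel region between them into a single strand, as in the proof of Lemma \ref{Dehn twisting graph}, so that only one strand needs untwisting.
\item Check that every slide stays inside $A$ or inside $F$ away from the other twisting curves. Then the argument is uniform in $n$ and commutes with the remaining twists, which is what the subsequent steps require.
\end{enumerate}

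The main obstacle is step 1. One must confirm that, after the earlier moves, the graph really contains a loop isotopic in $F$ to the twisting curve, with the correct framing relative to $F$. Only then does sliding around it reproduce exactly the surface Dehn twist, rather than a twist differing by a framing correction coming from how $F$ sits in $S^3$. I would verify this by drawing $A$ as a flat annulus as in Figure \ref{fig:first twist local} and checking that the loop lies in $F$ with the surface framing. If no such loop exists, the fallback is to slide the strand along a parallel copy of the twisting curve formed by another edge pushed off $F$. That requires tracking the blackboard framing, which is the delicate point.
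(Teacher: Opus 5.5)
Your plan is essentially the paper's argument. Restrict to an annular neighborhood of the blue twisting curve $V''_{0/1}$, note that part of the graph is disjoint from and parallel in $F$ to that curve, and slide the crossing strand along this parallel region to undo each twist, so the blue curve can be discarded. In the actual configuration only a single arc crosses the blue curve (your hedged alternative, not the two-arc picture), so the preliminary crushing step is unnecessary; and since the parallel region and the slide both lie in $F$, no framing correction arises.
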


\begin{proof}
We look at an annulus tubular neighborhood of this Dehn twisting curve again, which is depicted in Figure \ref{fig:second-claim}. Here the only difference with that of Claim \ref{first twist} is that such annulus intersects the graph in a single arc. Back in Figure \ref{fig:third-claim}, part of the graph is disjoint and parallel to the blue twisting curve. So in Figure \ref{fig:second-claim}, we can use such parallel region to slide the strand that crosses the blue curve. Then the Dehn twist along this blue curve is undone via such slide, and this is illustrated in Figure \ref{fig:second-claim}. 
\end{proof}

By Claim \ref{second twist}, in our presentation of the graph $\Gamma_{4/(5+4n)}$ using the graph $\Gamma_{4/5}$ together with Dehn twisting curves, this blue twisting curve can be thrown away. Now we can further simplify as in Figure \ref{fig:operation-sequence3} and we encounter another interference with the green twisting circle on the second row on the left.

\begin{figure}[p]
\centering

\tikzset{
  operation/.style={
    inner sep=0pt,
    outer sep=0pt
  },
  operation arrow/.style={
    -{Stealth[length=3mm,width=2mm]},
    line width=0.8pt,
    shorten <=3mm,
    shorten >=3mm
  }
}

\begin{fitdiagram}
\begin{tikzpicture}
\matrix (sequence) [
  matrix of nodes,
  nodes={operation},
  column sep=14mm,
  row sep=12mm
] {
  \includegraphics[width=0.4\textwidth]{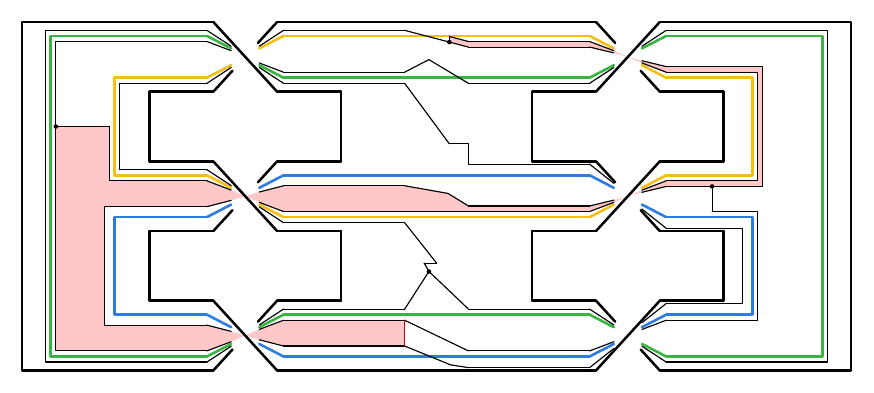}
  &
  \includegraphics[width=0.4\textwidth]{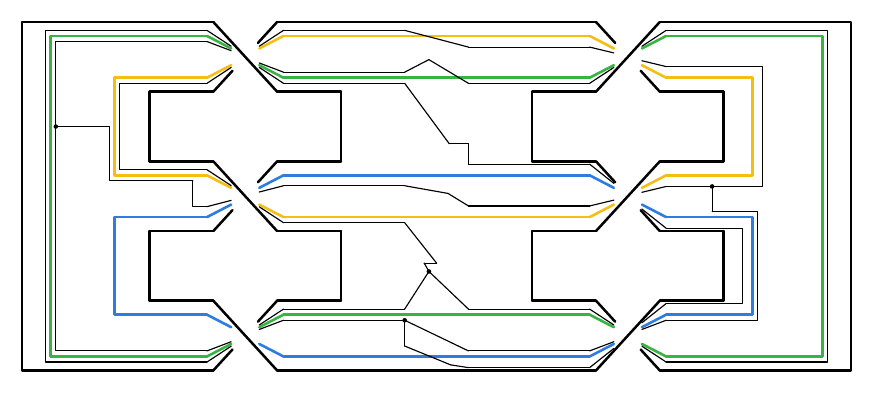}
  \\
  &
  \includegraphics[width=0.4\textwidth]{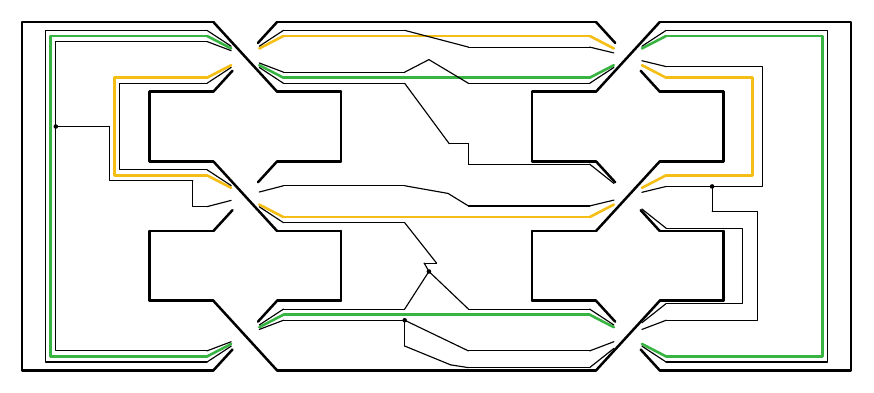}
  \\
};

\draw[operation arrow]
  (sequence-1-1.east)
  --
  (sequence-1-2.west);

\draw[operation arrow]
  (sequence-1-2.south)
  --node[midway,right] {Claim \ref{second twist}}
  (sequence-2-2.north);

\end{tikzpicture}
\end{fitdiagram}

\caption{After a slide, the graph encounters another interference with the twisting curve.}
\label{fig:third-claim}

\vspace{6mm}

\begin{fitdiagram}
\begin{tikzpicture}
\matrix (sequence) [
  matrix of nodes,
  nodes={operation},
  column sep=14mm,
  row sep=12mm
] {
  \includegraphics[width=0.26\textwidth]{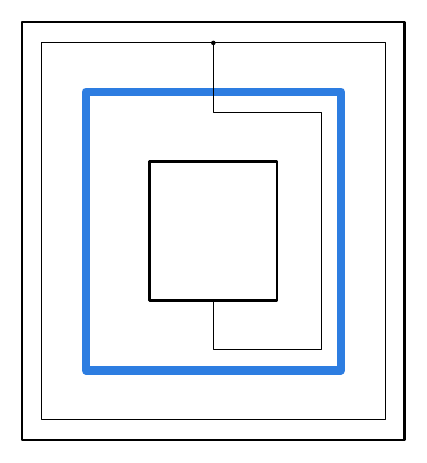}
  &
  \includegraphics[width=0.26\textwidth]{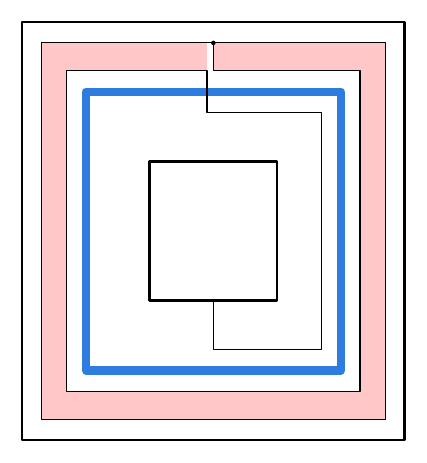}
  \\
  &
  \includegraphics[width=0.26\textwidth]{second_claim_before_twist.pdf}
  \\
};

\draw[operation arrow]
  (sequence-1-1.east)
  -- node[midway,above] {twist}
  (sequence-1-2.west);

\draw[operation arrow]
  (sequence-1-2.south)
  -- node[midway,left] {sliding}
  (sequence-2-2.north);
\end{tikzpicture}
\end{fitdiagram}

\caption{An edge slide located in an annulus.}
\label{fig:second-claim}

\end{figure}


\begin{figure}[p]
\centering

\tikzset{
  operation/.style={
    inner sep=0pt,
    outer sep=0pt,
    minimum width=0.38\textwidth,
    minimum height=0.125\textheight,
    anchor=center
  },
  operation arrow/.style={
    -{Stealth[length=2.5mm,width=1.8mm]},
    line width=0.8pt,
    shorten <=1mm,
    shorten >=1mm
  }
}

\newcommand{\sequenceimage}[1]{%
  \includegraphics[
    width=0.38\textwidth,
    height=0.125\textheight,
    keepaspectratio
  ]{#1}%
}

\begin{fitdiagram}
\begin{tikzpicture}
\matrix (sequence) [
  matrix of nodes,
  nodes={operation},
  column sep=14mm,
  row sep=8mm
] {
  \sequenceimage{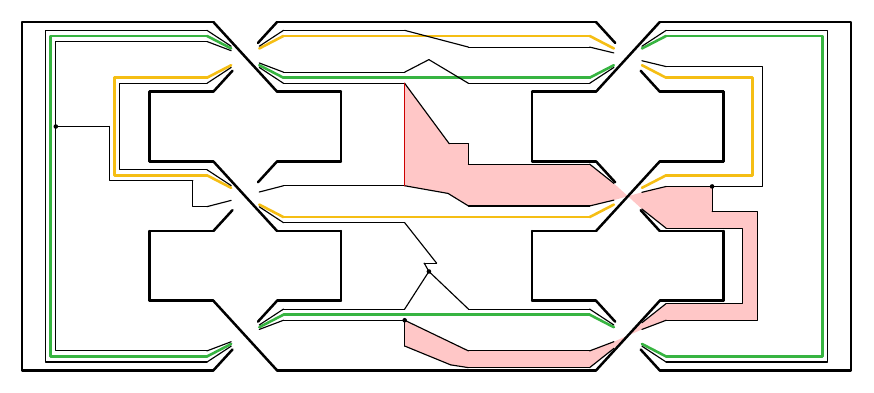}
  &
  \sequenceimage{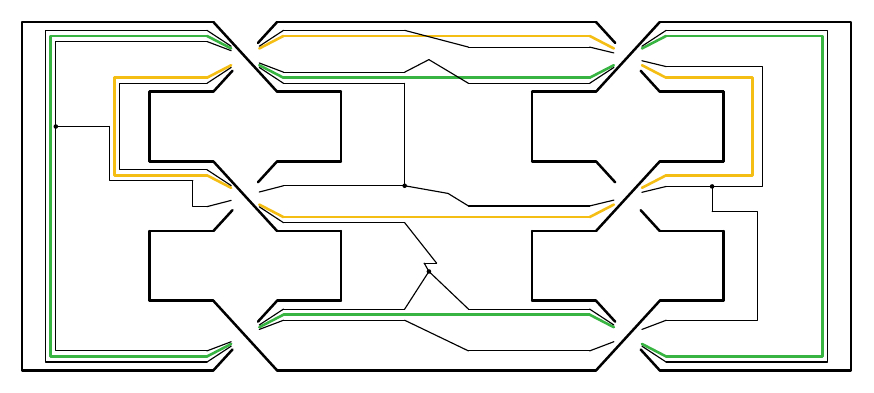}
  \\
  \sequenceimage{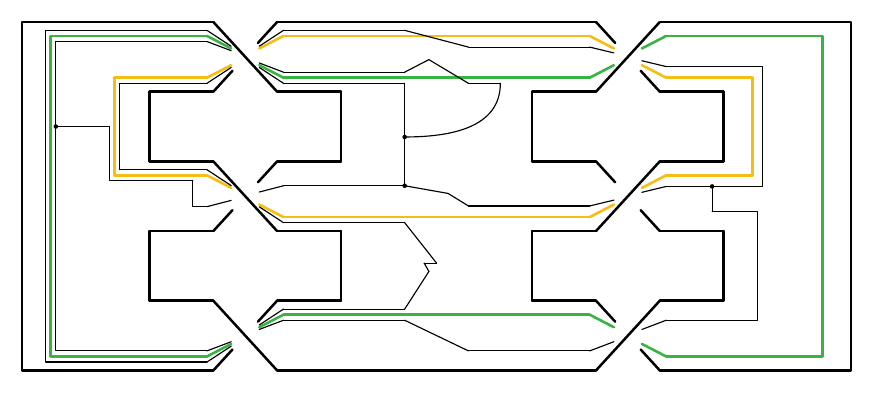}
  &
  \sequenceimage{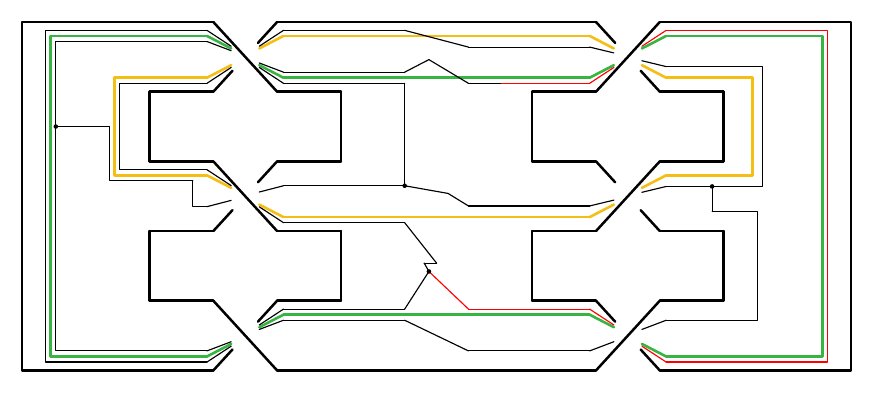}
  \\
  \sequenceimage{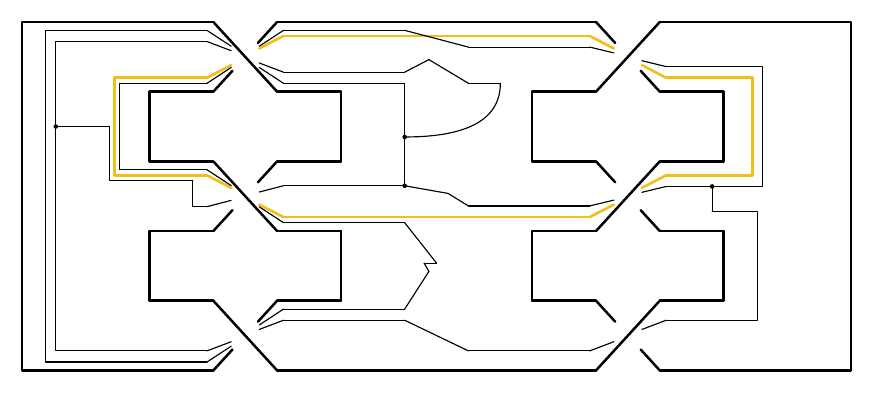}
  &
  \\
};

\draw[operation arrow]
  (sequence-1-1.east) -- (sequence-1-2.west);

\draw[operation arrow]
  (sequence-1-2.south) -- (sequence-2-2.north);

\draw[operation arrow]
  (sequence-2-2.west) -- (sequence-2-1.east);

\draw[operation arrow]
  (sequence-2-1.south)
  -- node[midway,right] {Claim~\ref{third twist}}
  (sequence-3-1.north);

\end{tikzpicture}
\end{fitdiagram}

\caption{A sequence of edge slides and isotopies.}
\label{fig:operation-sequence3}

\vspace{4mm}

\begin{fitdiagram}
\begin{tikzpicture}
\matrix (sequence) [
  matrix of nodes,
  nodes={operation},
  column sep=16mm,
  row sep=8mm
] {
  \sequenceimage{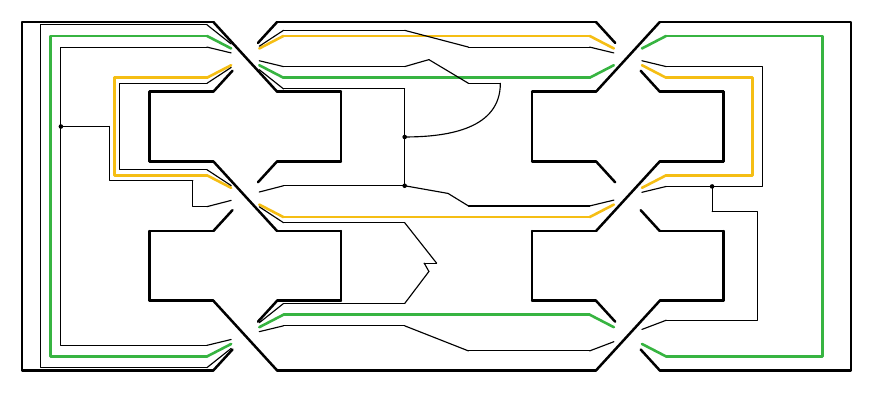}
  &
  \sequenceimage{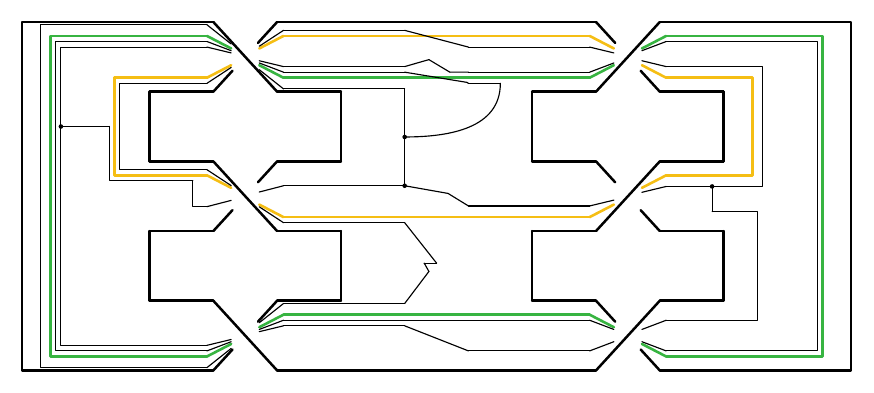}
  \\
   \sequenceimage{4-5_graph_step18.pdf}

  &
  \sequenceimage{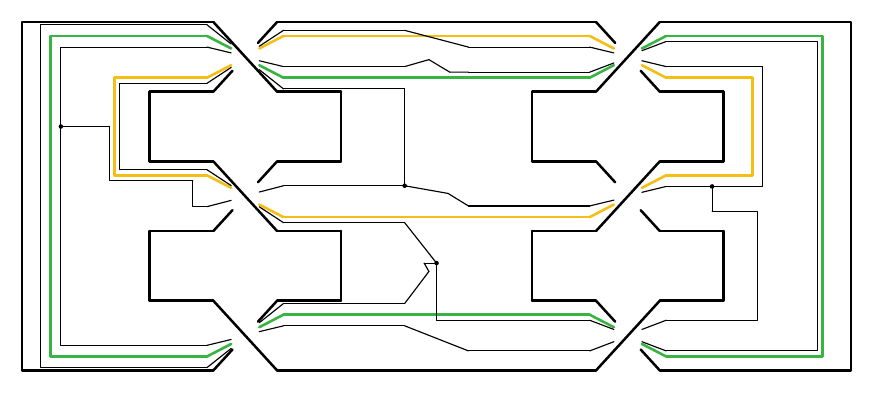}
  \\
};

\draw[operation arrow]
  (sequence-1-1.east) --node[midway, above]{twist} (sequence-1-2.west);

\draw[operation arrow]
  (sequence-1-2.south) --node[midway, right]{slide inside $F$} (sequence-2-2.north);
  
 \draw[operation arrow]
  (sequence-2-2.west) --node[midway, above]{slide}node[midway, below]{outside $F$} (sequence-2-1.east);

\end{tikzpicture}
\end{fitdiagram}

\caption{A sequence of edge slides and isotopies for Claim~\ref{third twist}.}
\label{fig:isolation-third-claim}

\end{figure}

\begin{figure}
\centering

\begin{fitdiagram}
\begin{tikzpicture}[
    operation/.style={
        inner sep=0pt,
        outer sep=0pt
    },
    operation arrow/.style={
        -{Stealth[length=3mm,width=2mm]},
        line width=0.8pt,
        shorten <=3mm,
        shorten >=3mm
    }
]

\matrix (sequence) [
    matrix of nodes,
    nodes={operation},
    column sep=14mm,
    row sep=14mm
] {
    \includegraphics[width=0.45\textwidth]{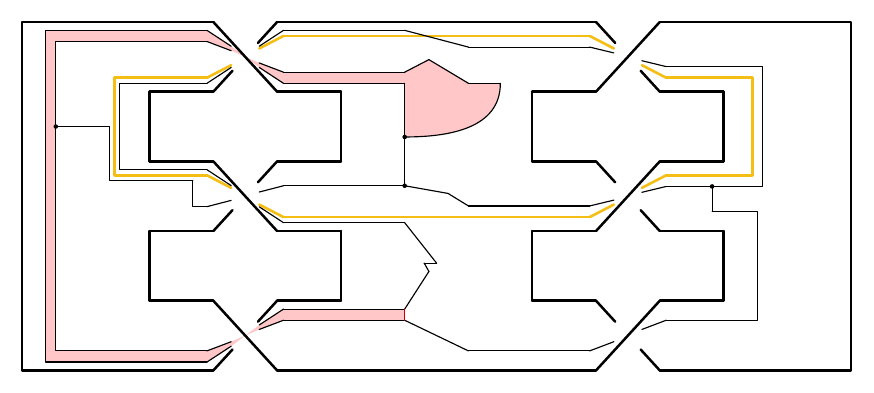}
    &
    \includegraphics[width=0.45\textwidth]{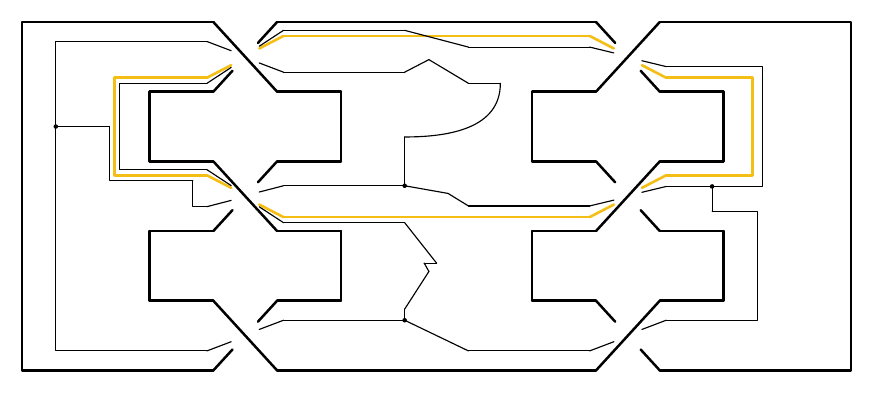}
    \\

    \includegraphics[width=0.45\textwidth]{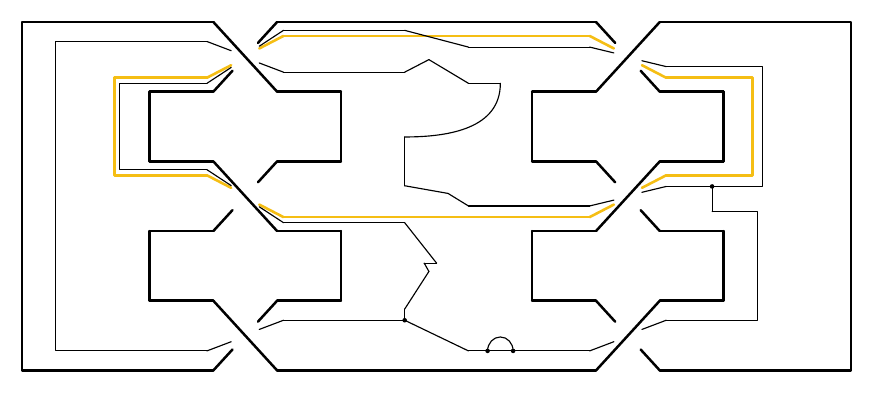}
    &
    \includegraphics[width=0.45\textwidth]{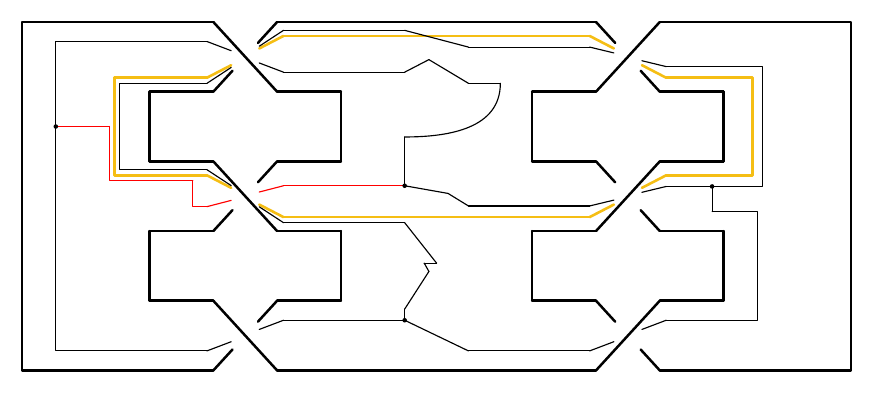}
    \\

    \includegraphics[width=0.45\textwidth]{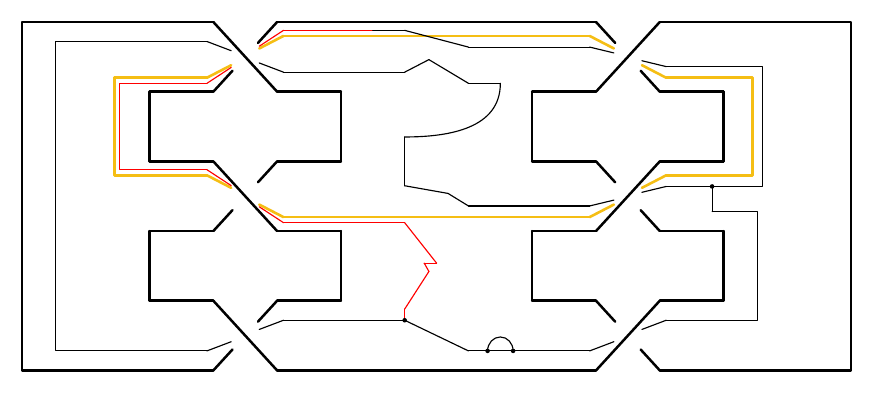}
    &
    \includegraphics[width=0.45\textwidth]{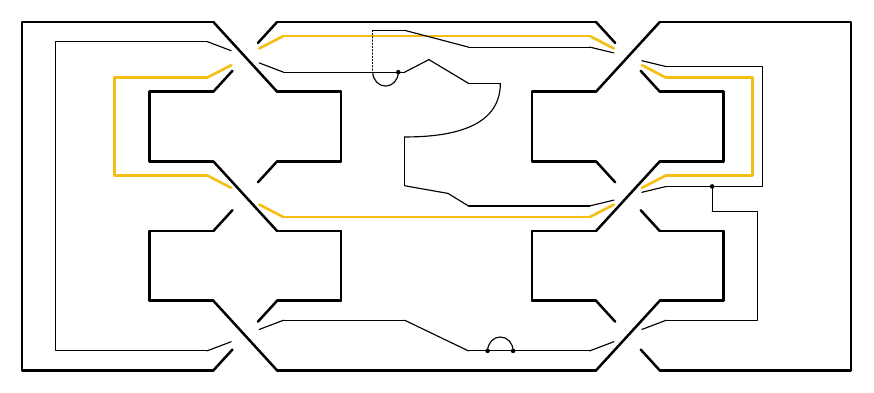}
    \\

    &
    \includegraphics[width=0.45\textwidth]{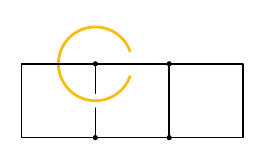}
    \\
    };

\draw[operation arrow]
    (sequence-1-1.east) -- (sequence-1-2.west);

\draw[operation arrow]
    (sequence-1-2.south) --(sequence-2-2.north);

\draw[operation arrow]
   (sequence-2-2.west) -- (sequence-2-1.east);

\draw[operation arrow]
    (sequence-2-1.south) --(sequence-3-1.north);

\draw[operation arrow]
    (sequence-3-1.east) -- (sequence-3-2.west);

\draw[operation arrow]
    (sequence-3-2.south) -- (sequence-4-2.north);

\end{tikzpicture}
\end{fitdiagram}

\caption{A sequence of edge slides and isotopies.}
\label{fig:operation-sequence4}

\end{figure}

We claim next that we can undo such twist.

\begin{claim}\label{third twist}
The graph in the second row on the left is equivalent to the graph in the third row on the left in Figure \ref{fig:operation-sequence3}.
\end{claim}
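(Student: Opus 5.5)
The plan is to follow the pattern of Claims \ref{first twist} and \ref{second twist}: localize the analysis to an annulus neighborhood $A$ of the green twisting curve $V'_{0/1}$ in $F$, and show that the $n$ left-handed twists along it can be absorbed by edge-slides. The figure sequence in Figure \ref{fig:isolation-third-claim} suggests that, unlike the earlier claims, the required slide is not available purely inside $A$. So I would split the argument into three stages: twist, slide inside $F$, and slide outside $F$.

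\emph{Stage 1: the twist.} First I would record how the graph meets $A$ in the second-row-left picture of Figure \ref{fig:operation-sequence3}. I expect the intersection to consist of a small number of transverse arcs, probably one or two. Applying $\tau^n$ restricted to $A$ replaces each such arc by an arc winding $n$ times around the core of $A$. Since $\tau$ acts on the graph only through these arcs, this gives an explicit description of $\tau^n$ of the current graph: the untwisted graph outside $A$, and spiralling arcs inside $A$.

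\emph{Stage 2: the slide inside $F$.} Next I would locate a part of the graph that runs parallel to the green curve, disjoint from it, in the complement of the other remaining twisting curves. This is the analogue of the parallel region used in Claim \ref{second twist}. Crushing the thin region between that parallel edge and a spiralling arc lets the spiralling arc slide over the parallel edge once per turn. Repeating this $n$ times should unwind the spiral. If such a parallel edge exists only after an isotopy, I would first perform that isotopy within $F$, away from $A$. As in the proof of Lemma \ref{Dehn twisting graph}, the key check is that the crushing region is either disjoint from, or transverse in parallel copies to, each remaining twisting curve, so that these slides commute with the remaining twists.

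\emph{Stage 3: the slide outside $F$.} The main obstacle, I expect, is that after the inside-$F$ slide the graph still differs from the target by an edge whose endpoint or crossing pattern has moved. To restore the target graph I would use a slide leaving $F$, as in Figure \ref{fig:4/5 graph simplify in S3 1}: slide along an edge that passes under (or over) all other edges, so that the slid arc can be pushed back into $F$. I must verify two things. First, this slide avoids the yellow and green twisting curves, or meets them only in a way compatible with the twist. Second, the result is exactly the third-row-left graph with the green twisting curve removed.

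Since every move in the three stages is an isotopy or an edge-slide, the two graphs are equivalent in the sense of the preliminaries. This equivalence holds uniformly in $n$, which is exactly what Claim \ref{third twist} asserts.
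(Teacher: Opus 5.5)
You have a genuine gap in Stage 2, and it affects the structure of the whole argument. Stage 2 is the mechanism of Claim \ref{second twist}. It presupposes a part of the graph that is disjoint from, and parallel to, the \emph{entire} green curve, so that each slide of the crossing strand over it removes one full turn and $n$ slides inside $F$ unwind the spiral. That is exactly what fails here. In an annulus neighborhood of the green curve only a single arc of the graph crosses it, and only a \emph{sub-arc} of the graph runs parallel to the green circle. Sliding the crossing strand along this partial parallel region does not remove a full turn, so ``repeating this $n$ times should unwind the spiral'' is unsupported. Moreover, after one such slide the local picture has changed, and nothing guarantees that a second slide of the same kind is available.

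The missing idea is that the slide outside $F$ is not a one-time clean-up at the end. It is the second half of the move that removes a \emph{single} twist. Start from the graph twisted $k$ times along the green curve. Slide the crossing strand along the partial parallel sub-arc inside $F$, and then perform a slide outside $F$ that returns the graph exactly to its $(k-1)$-times twisted form. This is what Figure \ref{fig:isolation-third-claim} isolates: twist, slide inside $F$, slide outside $F$, back to the pre-twist graph. Uniformity in $n$ then comes from induction on $k$: you apply this two-step process $n$ times, and each application resets the configuration so that the next one is available. So your ordering ($n$ inside-$F$ slides followed by one outside-$F$ slide) must be replaced by this interleaved inductive scheme. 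Within that scheme, your checks that the slides are compatible with the other remaining twisting curve (yellow) are the right things to verify.
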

\begin{proof}
The situation here is just slightly different from before. In an annulus regular neighborhood of this twisting curve, only a single arc crosses the green twisting curve. However, unlike the situation in Claim \ref{second twist}, there isn't part of the graph which is disjoint and parallel to this green circle completely but rather there is just a sub-arc of the graph that is parallel to this green circle. We isolate it in Figure \ref{fig:isolation-third-claim}: still making use of such partial parallel region to slide the strand that crosses the green circle. Then another slide, which is outside of $F$, can be applied to come back to the same graph before the Dehn twist. Inductively, this means that Dehn twists along green circle can be undone via repeated applications of the above two-step process. 
\end{proof}

Claim \ref{third twist} implies that we are free to throw away the green twisting curve, after which we can further simplify as in Figure \ref{fig:operation-sequence4}. After the indicated slides, we obtain the graph in the last row on the right. Since the yellow circle is 0-framed on $F$, it is easy to see that twisting the yellow circle gives unknotted graph in $S^3$. So we have proved that each graph $\Gamma_{4/(5+4n)}$ is unknotted for any $n\ge 0$.

\section{The case $d=4n+7$}
The strategy of proof for this case is almost identical to that of $d=4n+5$ case. The difference lies in the specific positions of the two tunnels. See the first picture in Figure \ref{fig:4-7operation-sequence1}.
\begin{figure}
    \centering
    \includegraphics[width=0.5\linewidth]{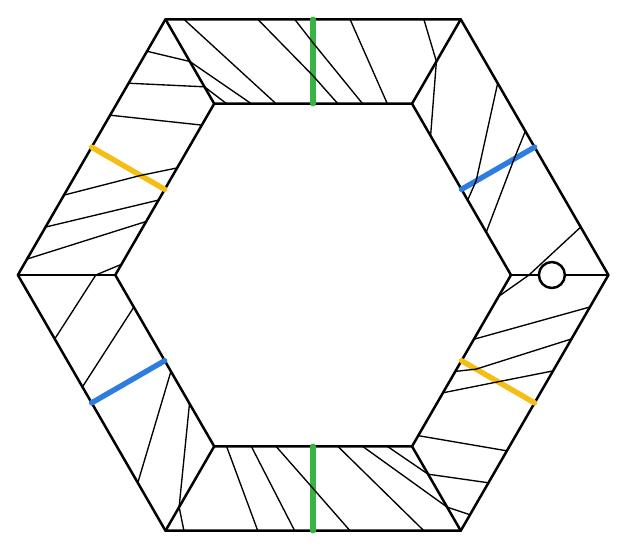}
    \caption{The link for 4/7.}
    \label{fig:placeholder}
\end{figure}

By Lemma \ref{Dehn twists of link}, the link $V_{4/7}\cup V'_{4/7}$ is related to the link $V_{4/(7+4n)}\cup V'_{4/(7+4n)}$ via $n$ times Dehn twist. Thus the two tunnels in the $4/7$ picture in Figure \ref{fig:4-7operation-sequence1}  represent two tunnels for general $4/(7+4n)$ case. We will apply a similar sequence of simplifications to the obtained graph in the hexagonal shaped annulus presentation. Similar to Lemma \ref{Dehn twisting graph}, we have 

\begin{lemma}\label{Dehn twisting graph for 4/7}
$$
 \tau^n(\Gamma_{4/7}) = \Gamma_{4/(7+4n)}
$$
\end{lemma}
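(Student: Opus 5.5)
The plan is to mirror the proof of Lemma \ref{Dehn twisting graph} for the $4/5$ case. Start from the graph obtained by attaching the two tunnels of the $4/7$ picture (first picture of Figure \ref{fig:4-7operation-sequence1}) to $V_{4/7}\cup V'_{4/7}$ in the hexagonally shaped annulus presentation. By Lemma \ref{Dehn twists of link}, applying $\tau^n$ to this initial graph gives the tunnel-augmented link $V_{4/(7+4n)}\cup V'_{4/(7+4n)}$ with two tunnels; this is how that picture represents the general case. The graph $\Gamma_{4/(7+4n)}$ is defined as the result of performing on this twisted graph the same sequence of edge-slides and isotopies that produced $\Gamma_{4/7}$. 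So the claim is that the simplification procedure commutes with $\tau^n$.

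To show this, I will treat each step of the simplification sequence separately. Each edge-slide is indicated by a shaded parallelity region in the annulus. After the identifications of opposite sides of the hexagon, this region is a thickened arc $R$ whose two long boundary arcs run parallel to one another. The slide is the crushing of $R$ onto its core, as in Figure \ref{fig:crushing and sliding}. The key check is that, for each twisting curve $V_{0/1}$, $V'_{0/1}$, $V''_{0/1}$, the two long boundary arcs of $R$ are either both disjoint from that curve or both cross it transversely in matching parallel fashion.

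Given this check, $\tau$ (supported in annular neighbourhoods of the three curves) carries $R$ to a thickened arc $\tau(R)$ that is again a parallelity region between the twisted edges. Crushing $\tau(R)$ is then exactly $\tau$ applied to the result of crushing $R$. The isotopies in the sequence take place inside $F$ and can be chosen supported away from, or equivariant with, the twisting annuli, so they also commute with $\tau$. Composing over all steps gives $\tau(\Gamma_{4/7})=\Gamma_{4/11}$, and induction on $n$ gives $\tau^n(\Gamma_{4/7})=\Gamma_{4/(7+4n)}$.

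The main obstacle is the case-by-case verification of the parallelity condition for the particular shaded regions in the $4/7$ sequence. Because the tunnels sit in different positions than in the $4/5$ case, a region could in principle cross a twisting curve along only one of its boundary arcs, or end inside a twisting annulus. If that happens, I would first try to isotope the region within $F$, pushing its ends off the annuli, so that it meets each curve in full parallel transverse arcs. Failing that, I would split the slide into two slides, each satisfying the condition.
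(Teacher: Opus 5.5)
Your proposal is correct and is essentially the paper's argument: the paper proves this lemma exactly as it proves Lemma \ref{Dehn twisting graph}. It views each slide in Figure \ref{fig:4-7operation-sequence1} as crushing a thickened arc whose two boundary copies are either both disjoint from or both transverse to each twisting curve, so that twisting and crushing commute. Your fallback contingencies are not needed, and the parallelity check is in fact dispensable: since $\tau$ is a homeomorphism of $F$, the $\tau^n$-image of a slide along a band $R\subset F$ is just the slide along the band $\tau^n(R)\subset F$, however $R$ meets the twisting curves.
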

The proof is almost identical to the proof of Lemma \ref{Dehn twisting graph}, so we only illustrate this process in Figure \ref{fig:4-7operation-sequence1}.

\begin{figure}[htbp]
\centering

\begin{fitdiagram}
\begin{tikzpicture}[
    operation/.style={
        inner sep=0pt,
        outer sep=0pt
    },
    operation arrow/.style={
        -{Stealth[length=3mm,width=2mm]},
        line width=0.8pt,
        shorten <=3mm,
        shorten >=3mm
    }
]

\matrix (sequence) [
    matrix of nodes,
    nodes={operation},
    column sep=14mm,
    row sep=14mm
] {
    \includegraphics[width=0.45\textwidth]{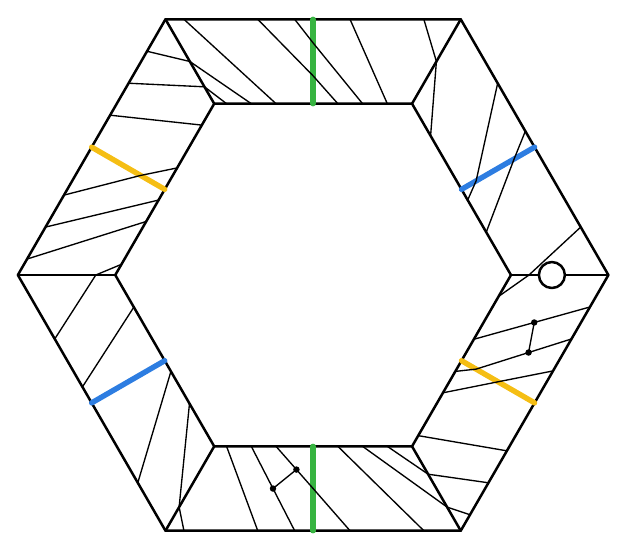}
    &
    \includegraphics[width=0.45\textwidth]{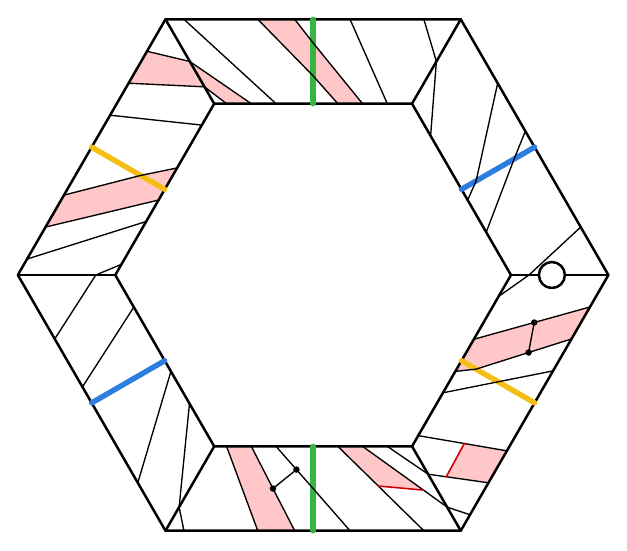}
    \\

    \includegraphics[width=0.45\textwidth]{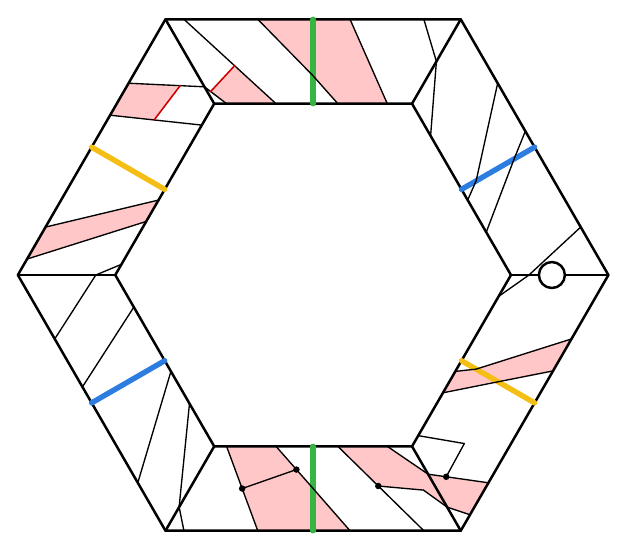}
    &
    \includegraphics[width=0.45\textwidth]{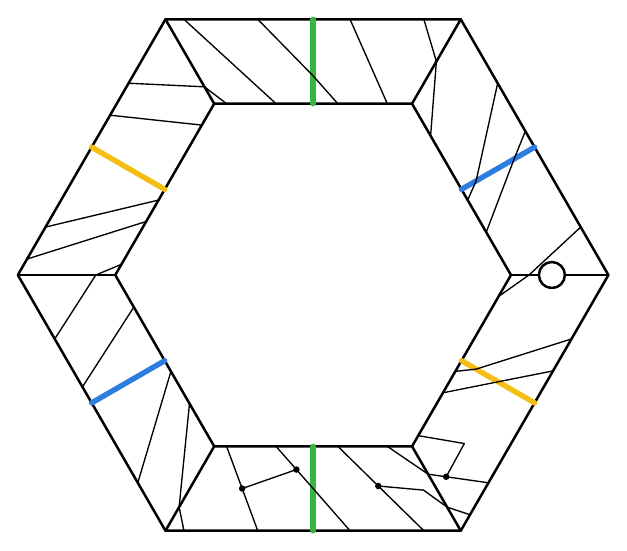}
    \\

    \includegraphics[width=0.45\textwidth]{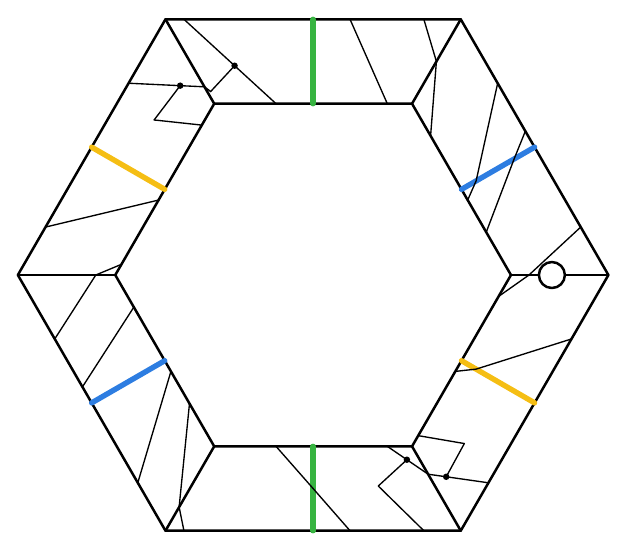}
    &
    \\
};

\draw[operation arrow]
    (sequence-1-1.east) -- (sequence-1-2.west);

\draw[operation arrow]
    (sequence-1-2.south) -- (sequence-2-2.north);

\draw[operation arrow]
    (sequence-2-2.west) -- (sequence-2-1.east);

\draw[operation arrow]
    (sequence-2-1.south) -- (sequence-3-1.north);


\end{tikzpicture}
\end{fitdiagram}

\caption{A sequence of edge slides and isotopies for 4/7.}
\label{fig:4-7operation-sequence1}

\end{figure}

Next, we pass from the annulus presentation to graph in $S^3$, as in  the first picture in Figure \ref{fig:4-7 operation-sequence2}. Then we start to simplify this graph as indicated. As we obtain the graph in the fifth row on the left, we encounter the interference with the blue twisting curves. The situation here is similar to that of Claim \ref{first twist}: there are two intersecting arcs. We isolate this process in Figure \ref{fig:4-7 first twist}: comparing the first picture in the last row and the second picture in the last row of Figure \ref{fig:4-7 operation-sequence2}, this means that twist first and then slide is the same as slide first and then twist. So, just like in the case of Claim \ref{first twist}, we have passed one strand over the twisting curve.

Then we continue the  simplification as indicated in Figure \ref{fig:4-7operation-sequence3}. For the transition of the picture in the first row on the right to the picture in the second row on the right, this time the situation is completely the same as that of Claim \ref{second twist}, so we can throw away the yellow twisting curve. After this, we follow the indicated edge slides and isotopies to meet another interference of the graph with the green curves in the third row: this time in order to undo the twist, we need a repeated application of 2-step process just like in Claim \ref{third twist} and we isolate this process in Figure \ref{fig:4-7 third twist}.

After this, we follow the indicated slides in Figure \ref{fig:4-7operation-sequence3} and in Figure \ref{fig:4-7operation-sequence4}. Finally, we get the last graph in Figure \ref{fig:4-7operation-sequence4}, and it is easy to see that this graph is unknotted for any number of twists. So we have proved that each graph $\Gamma_{4/(7+4n)}$ is unknotted for any $n\ge 0$.

\section{Further discussions}
A trisection is a decomposition of a 4-manifold $X$ into three simple pieces and can be thought of as a 4-dimensional version of a Heegaard splitting of 3-manifold. Gay and Kirby \cite{GayKirby} introduced the theory of trisection of 4-manifolds and a connection between trisection and (stable)GPRC has been studied in \cite{classification, characteizing}. More precisely, it is shown in Theorem 3 of \cite{characteizing} that for any \R-link $L$ and its admissible surface $\Sigma$, $L$ is stably handle-slide trivial if and only if the associated trisection $\mathcal T(L, \Sigma)$ is 2,3-standard. It is also indicated in \cite{EZ} that one approach to show that the links $L(3,2; 4/d)$ are stably handle-slide trivial is to show that resulting genus four trisections $\mathcal T(4/d)$, under certain choice of genus four admissible surface, are all 2,3-standard. As a direct corollary of our main theorem, such genus four trisections $\mathcal T(4/d)$ are all 2,3-standard. 
Since other currently unknown \R-links $L(3,2; c/d)$ also give rise to similar but more complicated genus four trisections and little progress has been made on standardizing genus four trisections, we find it interesting to work out an explicit standardizing process regarding $\mathcal T(4/d)$. An optimistic  goal would be to extract new understandings from it so as to understand more complicated genus four trisections.

Another tempting approach invokes the notion of $h$-genus of links. In \cite{Diaohgenus}, we show $h$-genus three 2\R-links are stably handle-slide trivial. Since tunnel number two 2\R-links all have $h$-genus less than or equal to three, $h$-genus three condition is less restrictive than that of tunnel number two. So another approach to trivialize $L(3,2; c/d)$ could be to show that they have $h$-genus three. Of course, we don't have good reason to eliminate the possibility that more complicated family of $L(3,2; c/d)$ could fail GPRC or stable GPRC. 

We can strengthen our main theorem a little bit: by using the genus three Heegaard surface given by the two tunnels in Section \ref{4-5 case}, we find that the link $V_{4/5}\cup V'_{4/5}$ is handle-slide trivial, i.e., it satisfies GPRC. More generally, experiments imply that the family $V_{4/d}\cup V'_{4/d}$ satisfy GPRC for many low values of $d$.

\begin{figure}[p]
\centering

\begin{fitdiagram}
\begin{tikzpicture}[
    operation/.style={
        inner sep=0pt,
        outer sep=0pt
    },
    operation arrow/.style={
        -{Stealth[length=3mm,width=2mm]},
        line width=0.8pt,
        shorten <=3mm,
        shorten >=3mm
    }
]

\matrix (sequence) [
    matrix of nodes,
    nodes={operation},
    column sep=14mm,
    row sep=10mm
] {
    \includegraphics[width=0.45\textwidth]{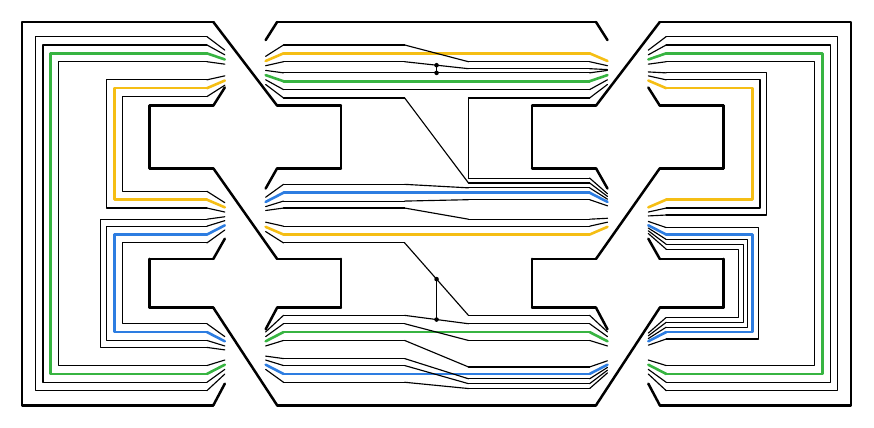}
    &
    \includegraphics[width=0.45\textwidth]{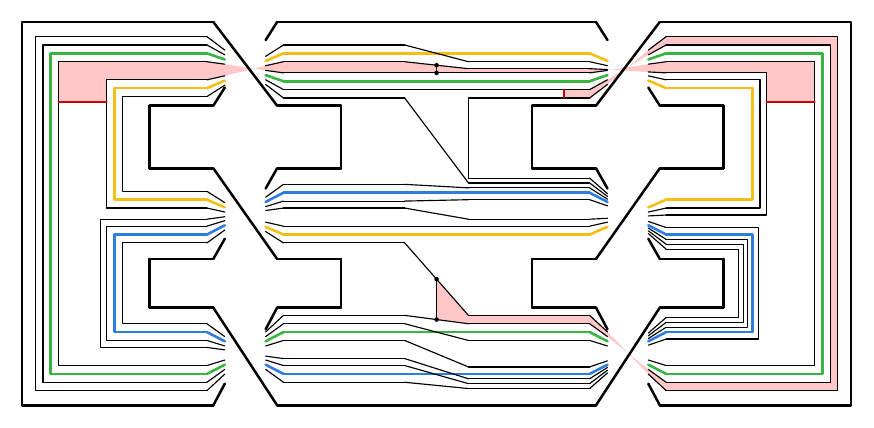}
    \\

    \includegraphics[width=0.45\textwidth]{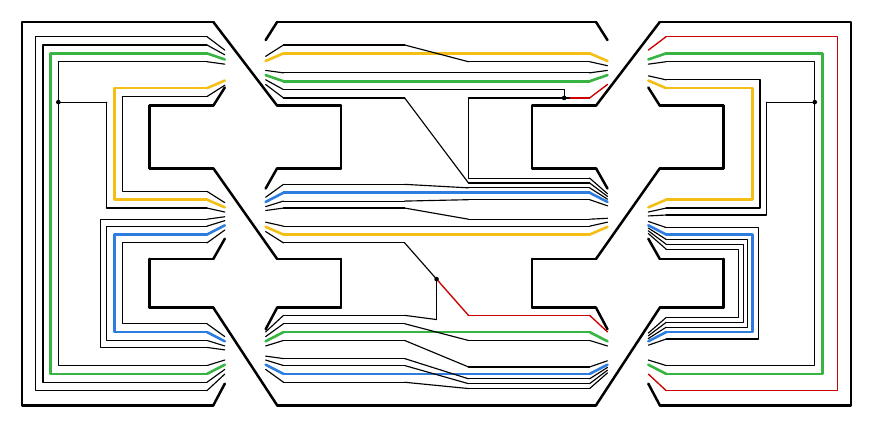}
    &
    \includegraphics[width=0.45\textwidth]{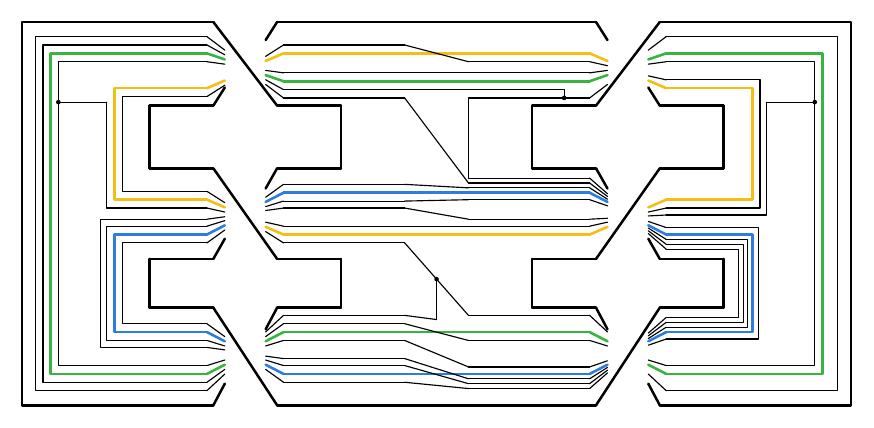}
    \\

    \includegraphics[width=0.45\textwidth]{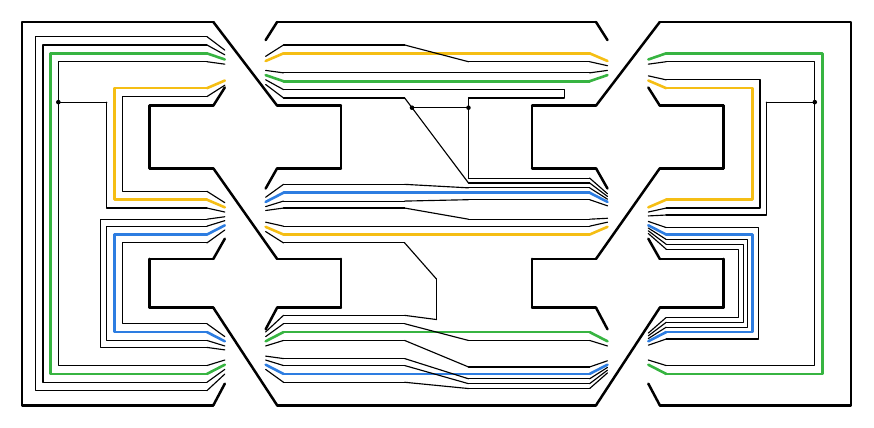}
    &
     \includegraphics[width=0.45\textwidth]{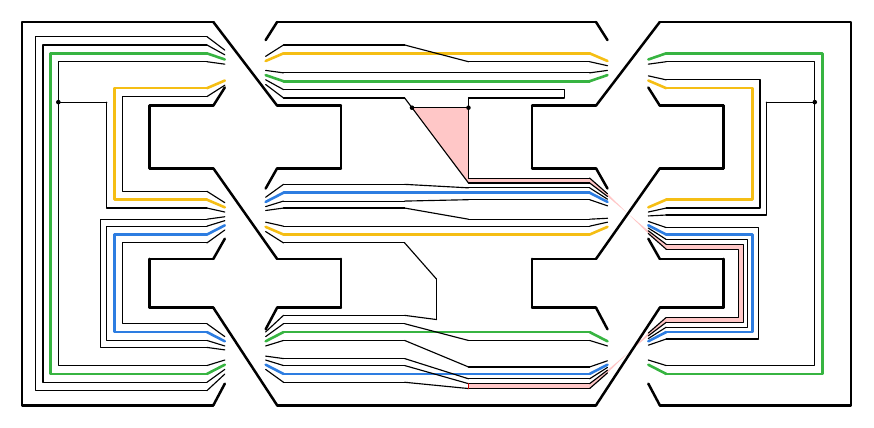}

    \\
       \includegraphics[width=0.45\textwidth]{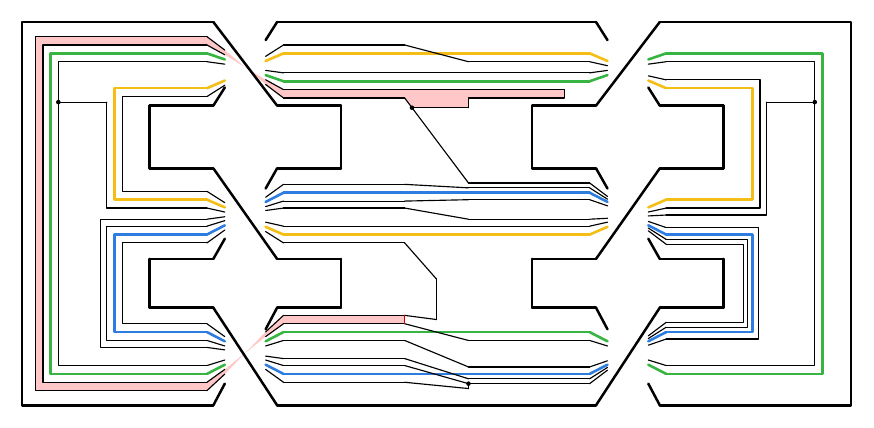}
    &
     \includegraphics[width=0.45\textwidth]{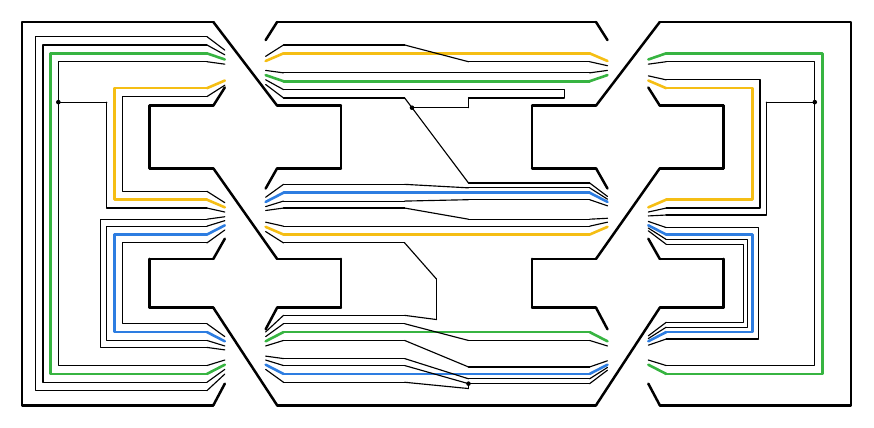}

    \\
        \includegraphics[width=0.45\textwidth]{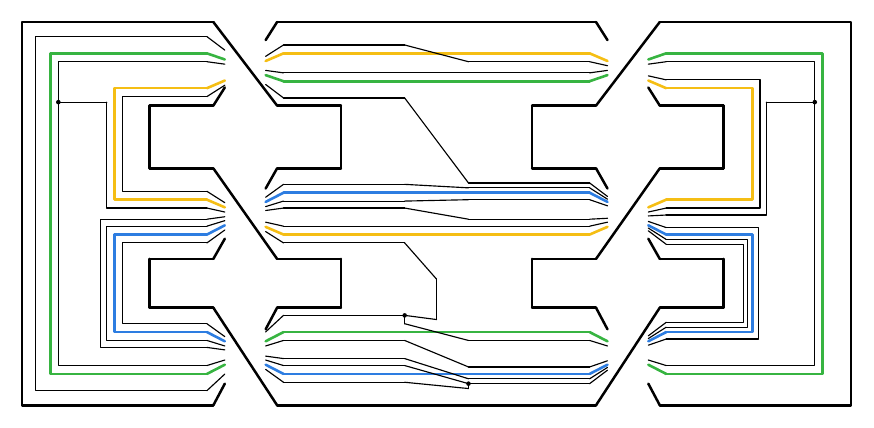}
    &
            \includegraphics[width=0.45\textwidth]{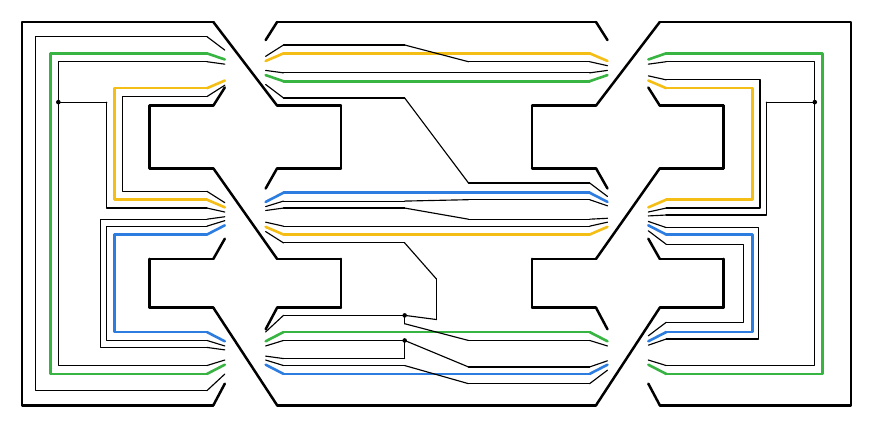}

    \\
};

\draw[operation arrow]
    (sequence-1-1.east) -- (sequence-1-2.west);

\draw[operation arrow]
    (sequence-1-2.south) -- (sequence-2-2.north);

\draw[operation arrow]
    (sequence-2-2.west) -- (sequence-2-1.east);

\draw[operation arrow]
    (sequence-2-1.south) -- (sequence-3-1.north);

\draw[operation arrow]
    (sequence-3-1.east) -- (sequence-3-2.west);
    \draw[operation arrow]
    (sequence-4-2.west) -- (sequence-4-1.east);

\draw[operation arrow]
    (sequence-3-2.south) -- (sequence-4-2.north);
    \draw[operation arrow]
    (sequence-4-1.south) -- (sequence-5-1.north);
    
    \draw[operation arrow]
    (sequence-5-1.east) --node[midway,above] {pass blue} (sequence-5-2.west);

\end{tikzpicture}
\end{fitdiagram}

\caption{A sequence of edge slides and isotopies for 4/7.}
\label{fig:4-7 operation-sequence2}

\end{figure}

\begin{figure}[p]
\centering

\begin{fitdiagram}
\begin{tikzpicture}[
  operation/.style={
    inner sep=0pt,
    outer sep=0pt,
    minimum width=0.40\textwidth,
    minimum height=0.23\textheight,
    anchor=center
  },
  operation arrow/.style={
    -{Stealth[length=3mm,width=2mm]},
    line width=0.8pt,
    shorten <=1mm,
    shorten >=1mm
  }
]

\matrix (sequence) [
  matrix of nodes,
  nodes={operation},
  column sep=16mm,
  row sep=10mm
] {
  \includegraphics[
    width=0.47\textwidth,
    height=0.23\textheight,
    keepaspectratio
  ]{4-7-step10.pdf}
  &
  \includegraphics[
    width=0.47\textwidth,
    height=0.23\textheight,
    keepaspectratio
  ]{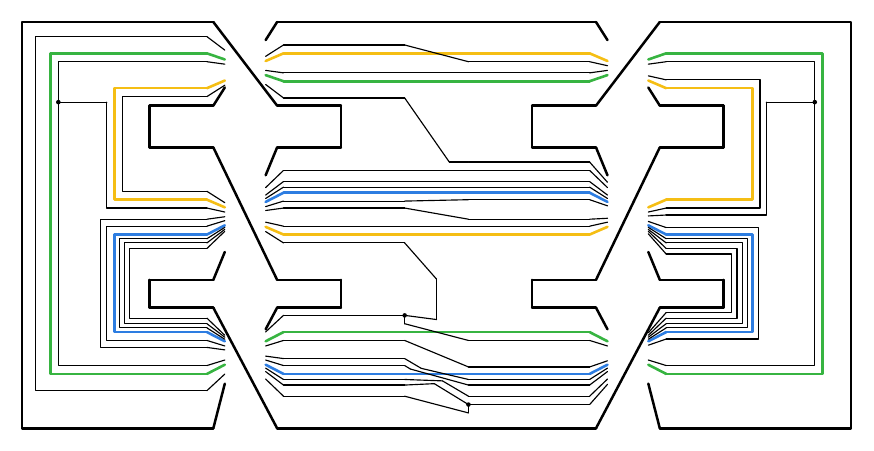}
  \\

  \includegraphics[
    width=0.47\textwidth,
    height=0.23\textheight,
    keepaspectratio
  ]{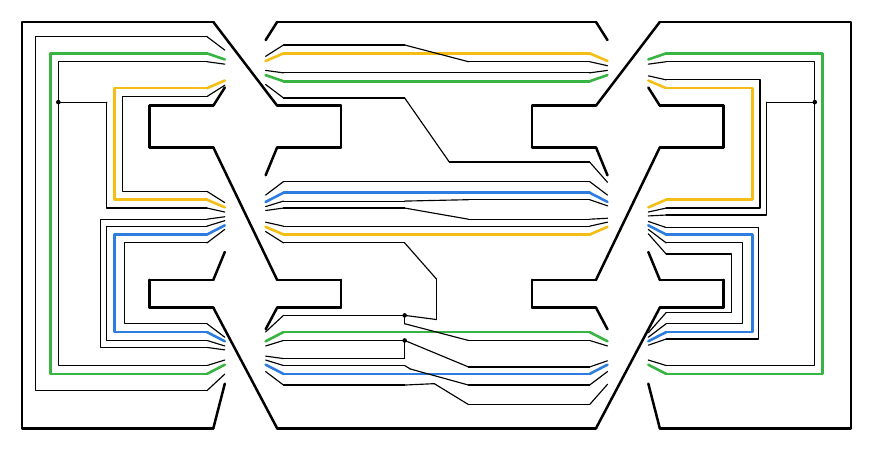}
  &
  \includegraphics[
    width=0.47\textwidth,
    height=0.23\textheight,
    keepaspectratio
  ]{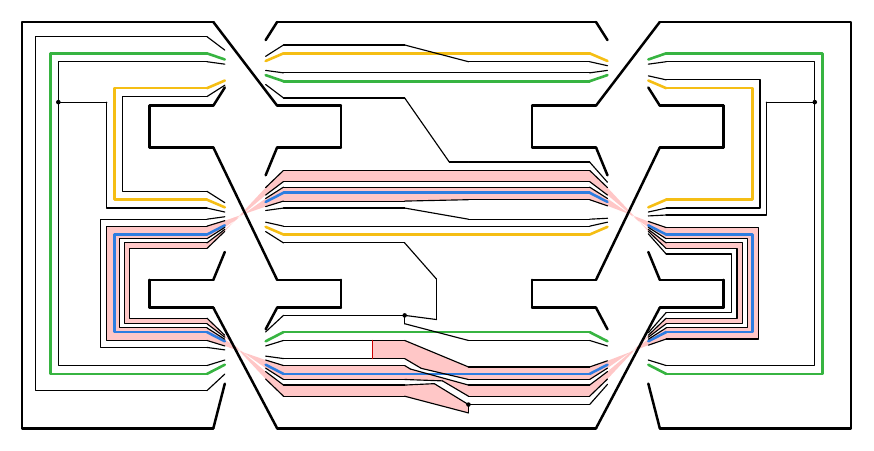}
  \\
};

\draw[operation arrow]
  (sequence-1-1.east) --node[midway, above]{twist} (sequence-1-2.west);

\draw[operation arrow]
  (sequence-1-2.south) --node[midway, right]{slide} (sequence-2-2.north);

\draw[operation arrow]
  (sequence-2-2.west) -- (sequence-2-1.east);


\end{tikzpicture}
\end{fitdiagram}

\caption{Passing the blue curve.}
\label{fig:4-7 first twist}

\end{figure}

\clearpage
\begin{figure}[p]
\centering

\begin{fitdiagram}
\begin{tikzpicture}[
  operation/.style={
    inner sep=0pt,
    outer sep=0pt,
    minimum width=0.40\textwidth,
    minimum height=0.23\textheight,
    anchor=center
  },
  operation arrow/.style={
    -{Stealth[length=3mm,width=2mm]},
    line width=0.8pt,
    shorten <=1mm,
    shorten >=1mm
  }
]

\matrix (sequence) [
  matrix of nodes,
  nodes={operation},
  column sep=16mm,
  row sep=10mm
] {
  \includegraphics[
    width=0.47\textwidth,
    height=0.23\textheight,
    keepaspectratio
  ]{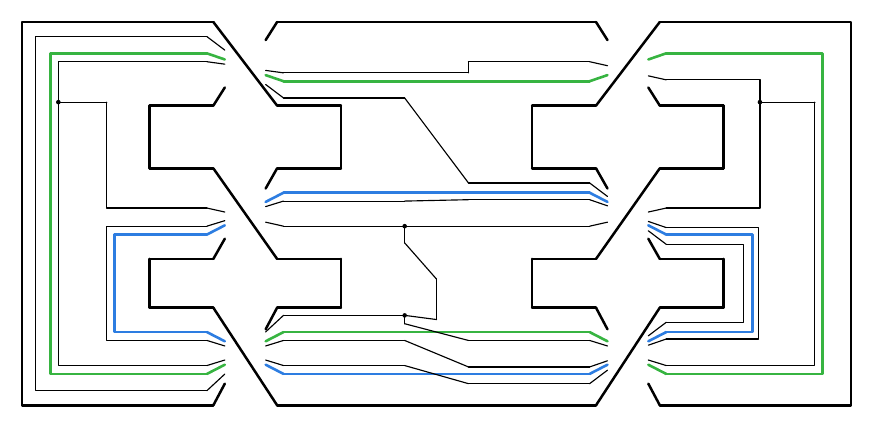}
  &
  \includegraphics[
    width=0.47\textwidth,
    height=0.23\textheight,
    keepaspectratio
  ]{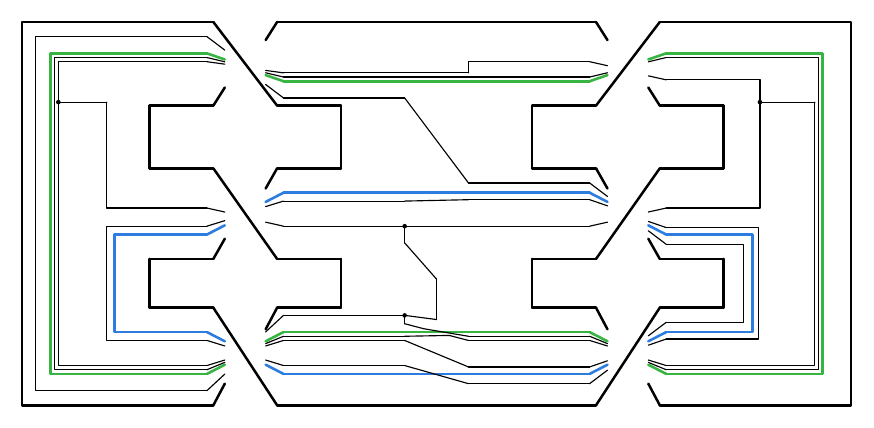}
  \\

  \includegraphics[
    width=0.47\textwidth,
    height=0.23\textheight,
    keepaspectratio
  ]{4-7-step16.pdf}
  &
  \includegraphics[
    width=0.47\textwidth,
    height=0.23\textheight,
    keepaspectratio
  ]{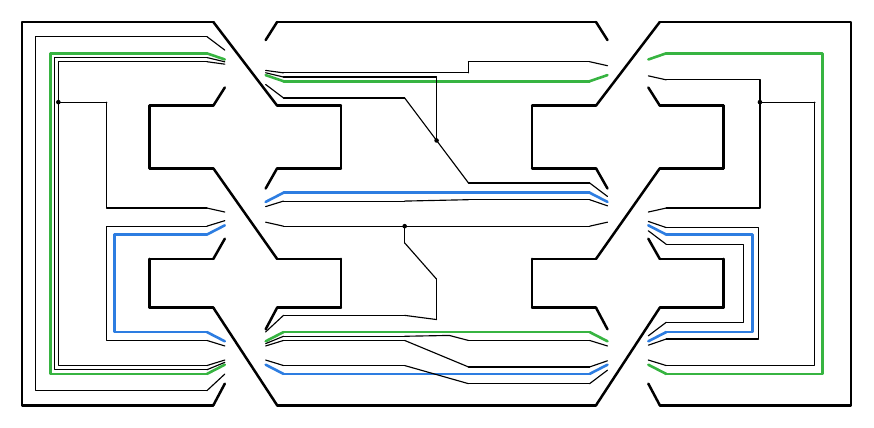}
  \\
};

\draw[operation arrow]
  (sequence-1-1.east) --node[midway, above]{twist} (sequence-1-2.west);

\draw[operation arrow]
  (sequence-1-2.south) --node[midway, right]{slide} (sequence-2-2.north);

\draw[operation arrow]
  (sequence-2-2.west) --node[midway,above]{slide} (sequence-2-1.east);


\end{tikzpicture}
\end{fitdiagram}

\caption{Passing the green curve.}
\label{fig:4-7 third twist}

\end{figure}

\clearpage

\begin{figure}[p]
\centering

\begin{fitdiagram}
\begin{tikzpicture}[
    operation/.style={
        inner sep=0pt,
        outer sep=0pt
    },
    operation arrow/.style={
        -{Stealth[length=3mm,width=2mm]},
        line width=0.8pt,
        shorten <=3mm,
        shorten >=3mm
    }
]

\matrix (sequence) [
    matrix of nodes,
    nodes={operation},
    column sep=16mm,
    row sep=10mm
] {
    \includegraphics[width=0.45\textwidth]{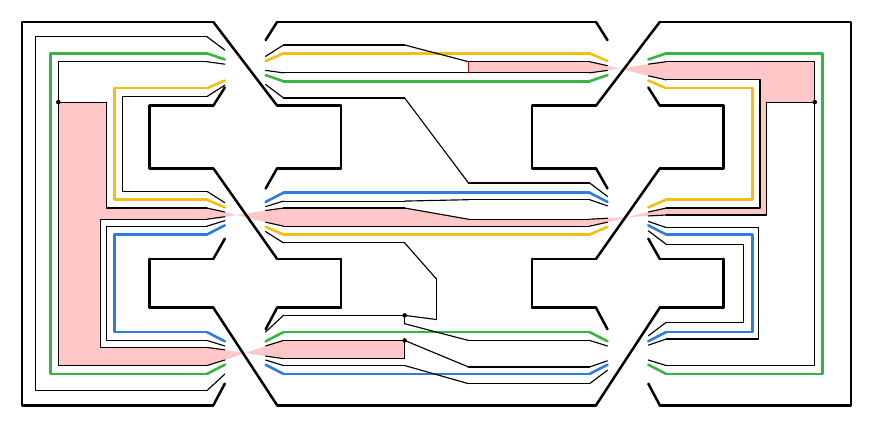}
    &
    \includegraphics[width=0.45\textwidth]{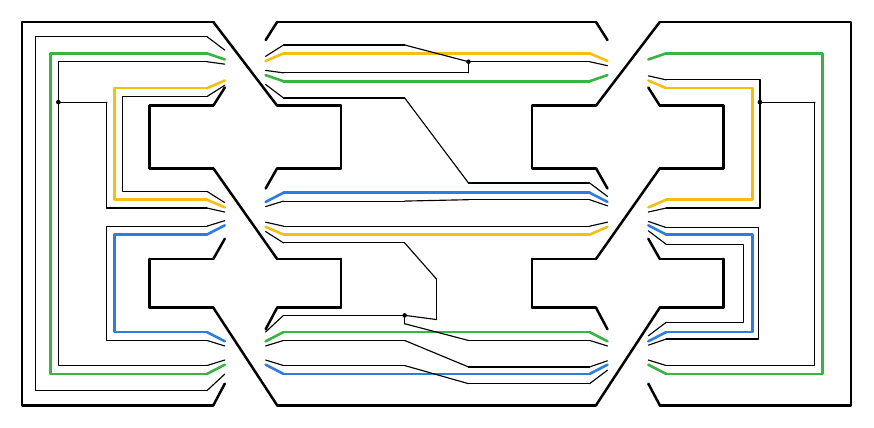}
    \\

    \includegraphics[width=0.45\textwidth]{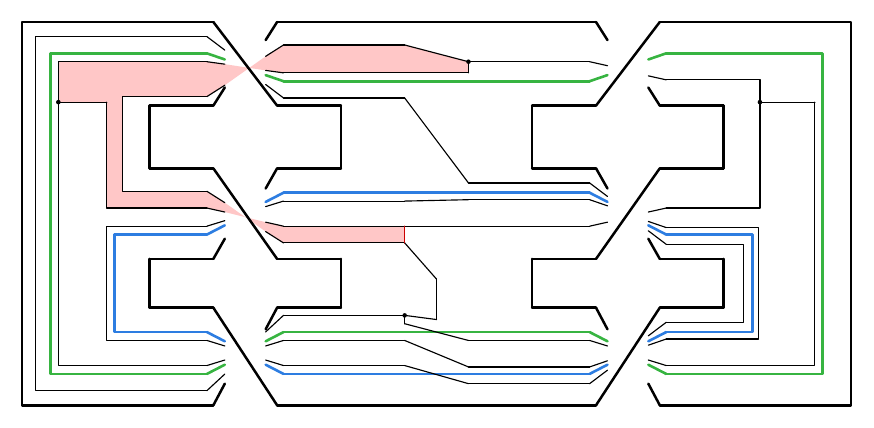}
    &
    \includegraphics[width=0.45\textwidth]{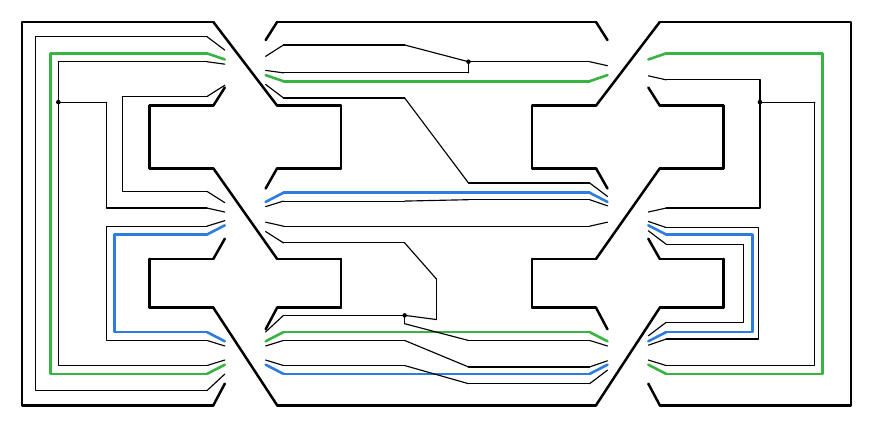}
    \\

    \includegraphics[width=0.45\textwidth]{4-7-step16.pdf}
    &
     \includegraphics[width=0.45\textwidth]{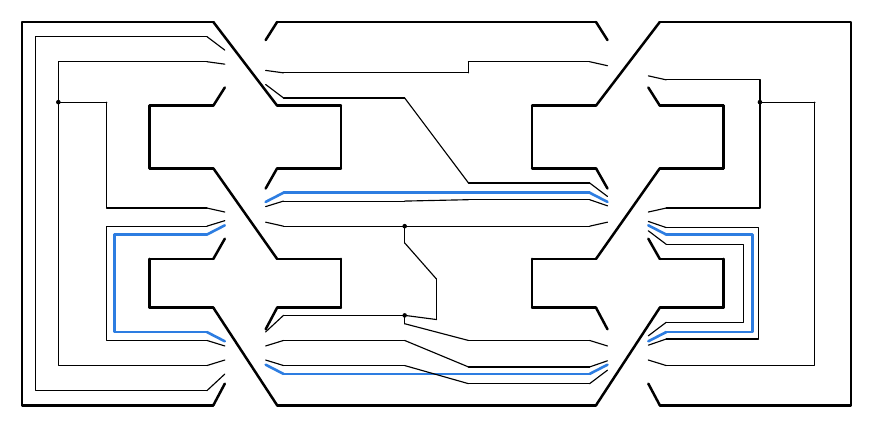}

    \\
       \includegraphics[width=0.45\textwidth]{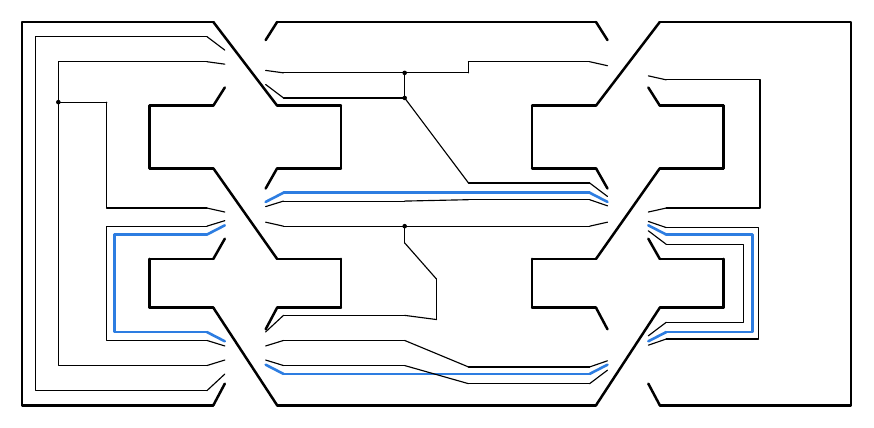}
    &
     \includegraphics[width=0.45\textwidth]{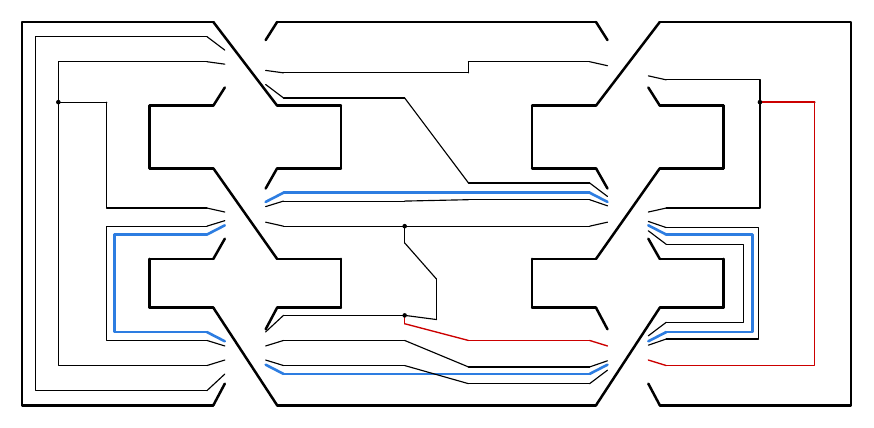}

    \\
        \includegraphics[width=0.45\textwidth]{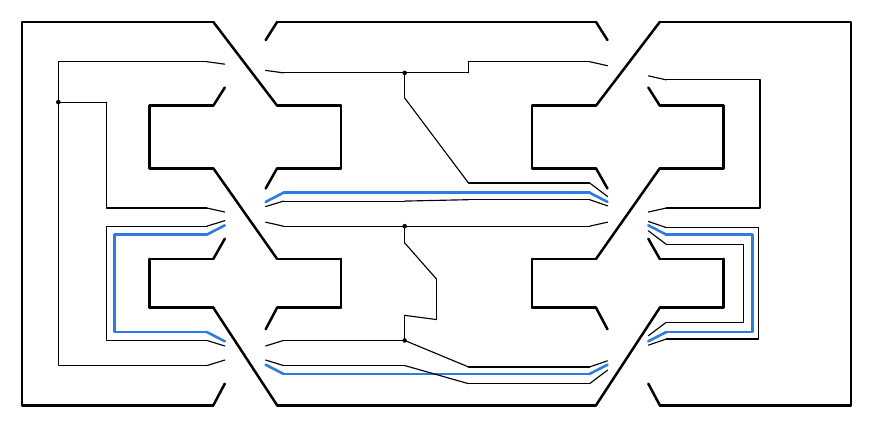}
    &
            \includegraphics[width=0.45\textwidth]{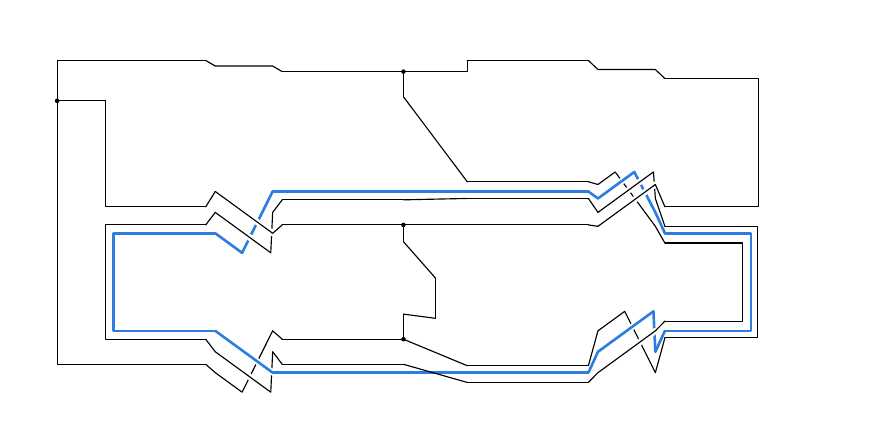}

    \\
};

\draw[operation arrow]
    (sequence-1-1.east) --(sequence-1-2.west);

\draw[operation arrow]
    (sequence-1-2.south) --node[midway, right]{pass yellow}  (sequence-2-2.north);

\draw[operation arrow]
    (sequence-2-2.west) -- (sequence-2-1.east);

\draw[operation arrow]
    (sequence-2-1.south) -- (sequence-3-1.north);

\draw[operation arrow]
    (sequence-3-1.east) --node[midway, above]{pass green}  (sequence-3-2.west);
    \draw[operation arrow]
    (sequence-4-2.west) -- (sequence-4-1.east);

\draw[operation arrow]
    (sequence-3-2.south) -- (sequence-4-2.north);
    \draw[operation arrow]
    (sequence-4-1.south) -- (sequence-5-1.north);
    
    \draw[operation arrow]
    (sequence-5-1.east) -- (sequence-5-2.west);

\end{tikzpicture}
\end{fitdiagram}

\caption{A sequence of edge slides and isotopies for 4/7.}
\label{fig:4-7operation-sequence3}

\end{figure}

\begin{figure}[htbp]
\centering

\begin{fitdiagram}
\begin{tikzpicture}[
  operation/.style={
    inner sep=0pt,
    outer sep=0pt,
    minimum width=0.40\textwidth,
    minimum height=0.23\textheight,
    anchor=center
  },
  operation arrow/.style={
    -{Stealth[length=3mm,width=2mm]},
    line width=0.8pt,
    shorten <=1mm,
    shorten >=1mm
  }
]

\matrix (sequence) [
  matrix of nodes,
  nodes={operation},
  column sep=16mm,
  row sep=10mm
] {
  \includegraphics[
    width=0.40\textwidth,
    height=0.23\textheight,
    keepaspectratio
  ]{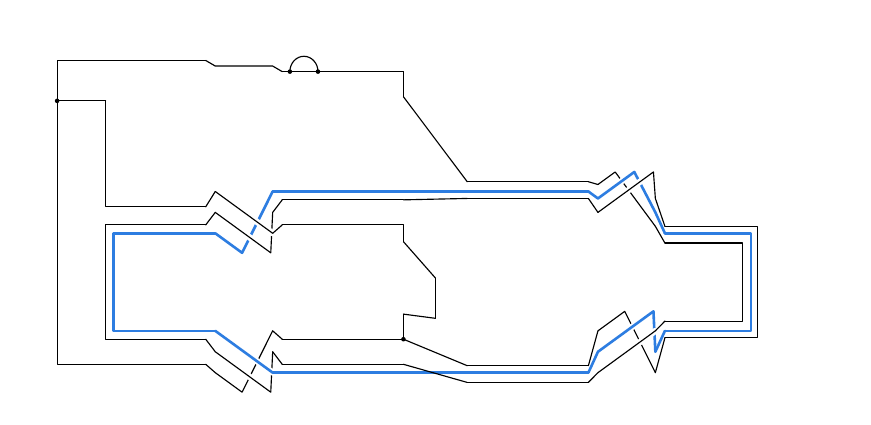}
  &
  \includegraphics[
    width=0.40\textwidth,
    height=0.23\textheight,
    keepaspectratio
  ]{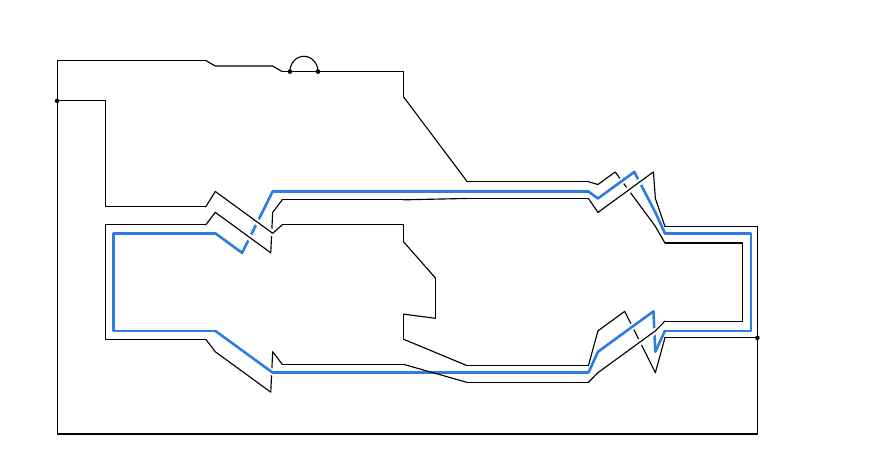}
  \\

  \includegraphics[
    width=0.40\textwidth,
    height=0.23\textheight,
    keepaspectratio
  ]{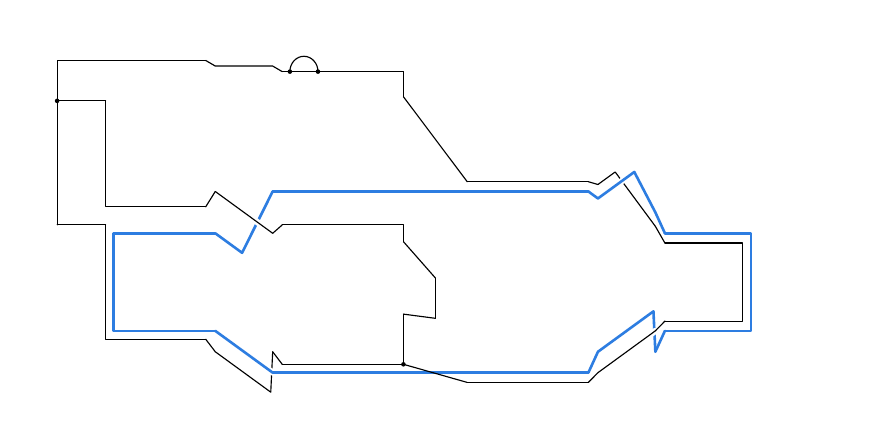}
  &
  \includegraphics[
    width=0.40\textwidth,
    height=0.23\textheight,
    keepaspectratio
  ]{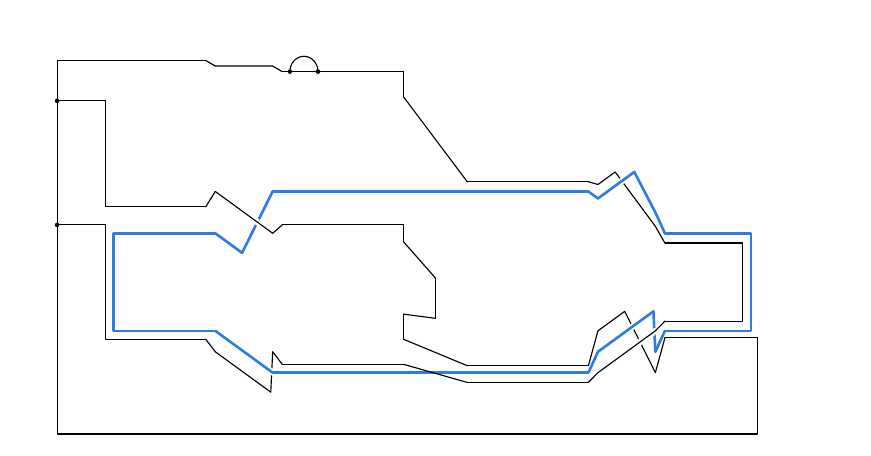}
  \\

  \includegraphics[
    width=0.40\textwidth,
    height=0.23\textheight,
    keepaspectratio
  ]{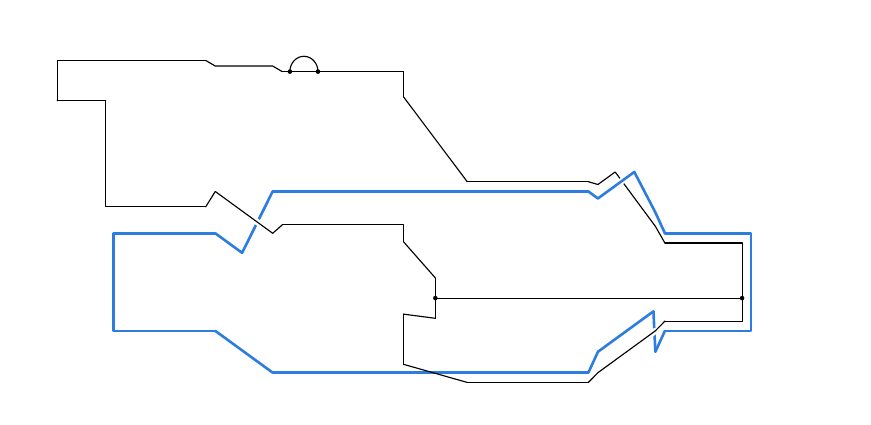}
  &
  \includegraphics[
    width=0.30\textwidth,
    height=0.23\textheight,
    keepaspectratio
  ]{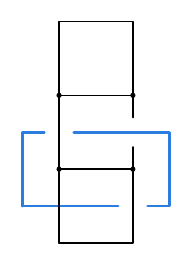}
  \\
};

\draw[operation arrow]
  (sequence-1-1.east) -- (sequence-1-2.west);

\draw[operation arrow]
  (sequence-1-2.south) -- (sequence-2-2.north);

\draw[operation arrow]
  (sequence-2-2.west) -- (sequence-2-1.east);

\draw[operation arrow]
  (sequence-2-1.south) -- (sequence-3-1.north);

\draw[operation arrow]
  (sequence-3-1.east) -- (sequence-3-2.west);

\end{tikzpicture}
\end{fitdiagram}

\caption{A sequence of edge slides and isotopies for $4/7$.}
\label{fig:4-7operation-sequence4}

\end{figure}

\clearpage
\nocite{*}
\bibliographystyle{amsplain}
\bibliography{references}

\end{document}